\frenchspacing \setlength{\parindent}{0pt}
\newtheorem{theorem}{Theorem}
\newtheorem{proposition}{Proposition}
\newtheorem{lemma}{Lemma}
\newtheorem{definition}{Definition}
\newtheorem{example}{Example}
\newtheorem{condition}{Condition}
\newcommand{\SH}{@{\hspace{2.4mm}}}  
\newcommand{\SHa}{@{\hspace{0.5mm}}}   
\newcommand{\degrees}{\ensuremath{^\circ}}
\newcommand{\Cpm}{{\mathbb{P}_\mu}} 
\newcommand{\Cpl}{{\mathbb{P}_\lambda}} 
\newcommand{\bP}{\mathbb{P}}
\newcommand{\tn}{{i_1 \dots i_n}} 
\newcommand{\tnj}{{j_1 \dots j_n}}
\newcommand{\tnks}{{\st{k}_n}}
\newcommand{\tnk}{{k_1 \dots k_n}}
\newcommand{\tns}{{\st{i}_n}}     
\newcommand{\tnjs}{{\st{j}_n}} 
\newcommand{\st}[1]{\underline{#1}} 
\newcommand{\T}[1][]{\cT_{#1}}
\newcommand{\cT}{\mathcal{T}}
\newcommand{\set}[1]{\left\{#1\right\}}
\newcommand{\setcard}[1]{\#{#1}} 
\newcommand{\exmat}[2][]{\mathcal{M}^{\scriptscriptstyle #1}\left({#2}\right)}
\newcommand{\evec}{\lvec{1}{1}}
\newcommand{\lvec}[2]{\begin{bmatrix}{#1}&{#2}\end{bmatrix}}
\newcommand{\Cprob}[1]{\prob[\Cpm]{#1}}
\newcommand{\Cprol}[1]{\prob[\Cpl]{#1}}
\newcommand{\prob}[2][\mathbb{P}]{{#1}\left({#2}\right)}
\newcommand{\A}{\mathbb{A}}
\newcommand{\E}{\mathbb{E}}
\newcommand{\ind}{\textbf{1}_}
\newcommand{\lp}{\phantom{\sigma^{k}}}   
\newcommand{\rp}{\phantom{...}}          
\begin{document}

\title{Correlated fractal percolation and the Palis conjecture}

\author{Michel Dekking}
\address{Michel Dekking, Delft Institute of Applied Mathematics,
 Technical University of Delft,  The Netherlands
\newline
\tt{F.M.Dekking{@}tudelft.nl}}

\author{Henk Don}
\address{Henk Don, Delft Institute of Applied Mathematics,
 Technical University of Delft,  The Netherlands
\tt{H.Don@tudelft.nl}}

\maketitle

\begin{abstract}

\noindent Let $F_1$ and $F_2$ be independent copies of correlated
fractal percolation, with Hausdorff dimensions $\dim_{\rm H}(F_1)$
and  $\dim_{\rm H}(F_2)$. Consider the following question: does
$\dim_{\rm H}(F_1)+\dim_{\rm H}(F_2)>1$  imply that their
algebraic difference $F_1-F_2$ will contain an interval? The well
known Palis conjecture states that `generically' this should be
true. Recent work by  Kuijvenhoven and the first author
(\cite{DK08}) on random Cantor sets can not answer this question
as their condition on the joint survival distributions of the
generating process is not satisfied by correlated fractal
percolation. We develop a new condition which permits us to solve
the problem, and we prove that the condition of  (\cite{DK08})
implies our condition. Independently of this we give a solution to
the critical case, yielding that a strong version of the Palis
conjecture holds for fractal percolation and correlated fractal
percolation: the algebraic difference contains an interval almost
surely if and only if the  sum of the Hausdorff dimensions of the
random Cantor sets exceeds one.
\end{abstract}

\section{Introduction}

In this paper we consider  a natural class (called correlated
fractal percolation) of random Cantor sets with dependence, as
opposed to the independent case, which is know as fractal
percolation or Mandelbrot percolation. Two and three dimensional
versions of both types of sets have occurred  before in the
literature, especially as a modeling tool, see e.g., \cite{SPB01},
where the dependent case is called the `homogeneous algorithm',
and the independent case the `heterogeneous algorithm' (See
Figure~\ref{fig:cfpofp} Left, respectively Right for an
illustration of these two processes by two realizations). In
\cite{M} they are called
 `constrained curdling', respectively `canonical curdling'.
All this work  has its roots in the seminal paper \cite{M74}.

Our main goal is to answer the question whether or not an interval
occurs in the algebraic difference of two independent random
Cantor sets from the correlated fractal percolation class. A
complete answer is given in Theorem \ref{corrfrac-class} in
Section~\ref{classify}.

We also call correlated fractal percolation $m$ out of $M$
percolation (cf. Subsection~\ref{sub:def-corr}), where $m$ is an
integer with $1\le m\le M$.
 It will appear that the transition
from no interval to interval lies at values of $m\approx
\sqrt{M}$. The combinatorial Lemma~\ref{Stperc} lies at the basis
for a solution of all cases, except the case $m=\sqrt{M+1}$, which
is a tough nut to crack (Lemma~\ref{Stperc2}).

\begin{figure}[!b]
\centering
 \includegraphics[width=13cm, angle=180]{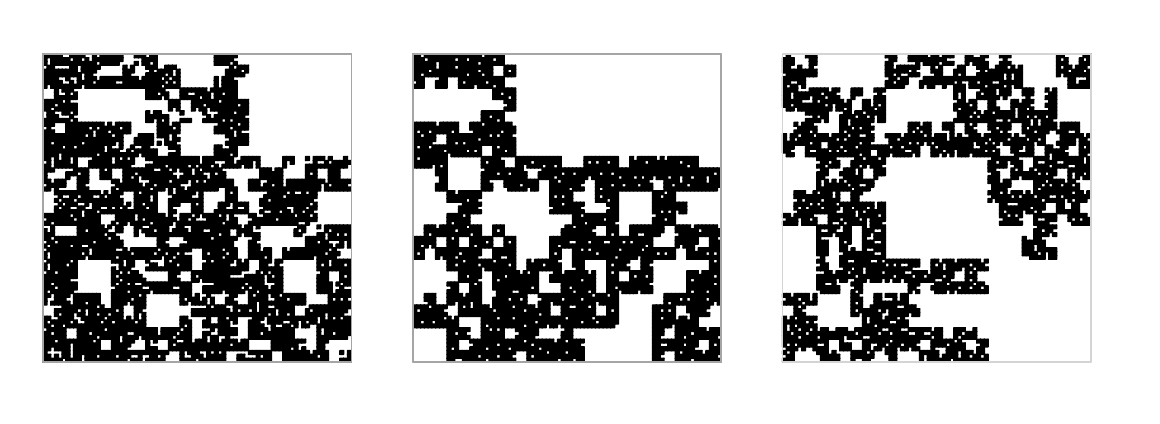}
\caption{\small Left: Two-dimensional 7 out of 9 correlated fractal percolation with $\mu(\emptyset)=0$.
 Middle: Two-dimensional 8 out of 9 correlated fractal percolation with $\mu(\emptyset)=\frac18$.
 Right: Ordinary two-dimensional fractal percolation with $p=7/9$.}
 \label{fig:cfpofp}
\end{figure}

 \section{Differences of random Cantor sets}

Here we will introduce $M$-adic random Cantor sets and their differences, and the main result
(Theorem~\ref{StDK08}) from \cite{DK08} regarding the Palis conjecture, with a rough sketch of the proof.
Finally we describe higher order Cantor sets which are particularly useful to obtain  a more complete characterization from Theorem~\ref{StDK08}.

\subsection{$M-$adic random Cantor sets}\label{sub:RCS}

An $M-$adic random Cantor set $F$ is constructed using the following
mechanism: take the unit interval and divide it into $M$ subintervals
of equal length. Each of those subintervals corresponds to a letter
in the alphabet $\mathbb{A} = \left\{0,\ldots,M-1\right\}$. It will
be convenient to consider $\A$ as an Abelian group with addition. So
for instance if $M=6$ we have $5 + 3 = 2$. Now define a \emph{joint
survival measure} $\mu$ on $2^{2^\mathbb{A}}$. It is determined by its values $(\mu(A))$ on the singletons $A\subset \A$. According to
this distribution we choose which subintervals are kept and which
are discarded. Then in each next construction step, each of the
surviving subintervals is again divided in $M$ subintervals of equal
length, of which a subset survives according to the distribution~$\mu$.

\smallskip

More formally, we consider the space of $\{0,1\}$-labeled $M$-adic trees  $\{0,1\}^\cT$,
 where we label each node $\tn\in\cT$ with $X_\tn\in\{0,1\}$.

The probability measure $\Cpm$ on this space
 is defined by requiring that $\Cprob{X_\emptyset=1}=1$ (where $\emptyset$ is the root of $\cT$), and that for all $\tn\in\cT$ the  random sets
\begin{align*}
  \set{i_{n+1}\in\A: X_{\tn i_{n+1}} = 1} \end{align*}
are independent and identically distributed according to $\mu$.
We let $\T[n]$ denote the set of nodes at level $n$, and for any $\tns=\tn$ from $\T[n]$ we define the
associated $M$-adic interval by
\begin{equation*}
I_{\tn}:=\left[\frac{i_1}{M}+\cdots+\frac{i_{n-1}}{M^{n-1}}+
 \frac{i_n}{M^n},\frac{i_1}{M}+\cdots+\frac{i_{n-1}}{M^{n-1}}+\frac{i_n+1}{M^n} \right].
\end{equation*}
The $n$-th level approximation $F^n$ of the random Cantor set is a
union of such $n$-th level $M$-adic intervals selected by the sets
$S_n$ defined by
$$ S_n=\{\tn: X_{i_1}=X_{i_1i_2}=\dots=X_{\tn}=1\}.$$
The random Cantor set $F$ is
$$F=\bigcap_{n=1}^{\infty } F^n=\bigcap_{n=1}^{\infty } \bigcup_{\;\;\tn \in S_n}I_{\tn}. $$
The \emph{marginal probabilities} $p_i$ of $\mu$  are defined for $i\in \mathbb{A}$
by
\begin{equation}\label{def:marg}
p_i := \sum_{X\subseteq \mathbb{A}:i\in X}\mu(X).
\end{equation}
\\
We start with the definition of the class of random Cantor sets
which we will take into consideration.

\subsection{Correlated fractal percolation}\label{sub:def-corr}

From now on we will consider one-dimensional fractal percolation.

\begin{definition}
Suppose $\mu$
assigns the \emph{same} positive probability to all subsets of
$\A$ with $m$ elements for some fixed integer $1\le m \le M$, and that $\mu$ assigns probability zero to all other non-empty subsets of $\A$. If $p:=(1-\mu({\emptyset}))\frac{m}{M}$ then we call this
$(m,M,p)$-percolation.
\end{definition}

\smallskip

 We can compute the marginal probabilities of $(m,M,p)$-percolation as follows.
Let $X$ be a subset of $\A$, chosen
according to the joint survival distribution $\mu$.
 The probability that
$X$ is non-empty is $1-\mu(\emptyset)$. Given that $X$ is
non-empty, the probability that a fixed $k\in\A$ belongs to $X$
equals $m/M$. It follows that for $k\in\A$ the
marginal probability $p_k$ is given by
\begin{equation*}
p_k = (1-\mu(\emptyset))\frac{m}{M} = p,
\end{equation*}
which is exactly the reason why we defined $(m,M,p)$-percolation by
requiring that $p=(1-\mu({\emptyset}))m/M$. Because
$0\leq\mu(\emptyset)\leq 1$, $(m,M,p)$-percolation is only defined
for $0\leq p\leq \frac{m}{M}$. From now on we will assume that $p>0$
and $m>0$, since giving the empty set probability one does not yield
the most exciting situation.

\subsection{Algebraic differences of sets}

The \emph{algebraic difference} $F_1-F_2$ of the sets $F_1$ and $F_2$ is defined by
$$F_1-F_2=\{x-y:\; x\in F_1,\; y\in F_2\}.$$

The well known Palis conjecture (\cite{PT}) states that
`generically' $\dim_{\rm H}(F_1)+\dim_{\rm H}(F_2)>1$ should imply
that the algebraic difference $F_1-F_2$ will contain an interval.

This question is considered in \cite{DS08} and \cite{DK08} for two
$M$-adic random Cantor sets $F_1$ and $F_2$ with the same $M$ but
not necessarily the same joint survival distribution.\\ One can
distinguish between joint survival distributions selecting
intervals independently and joint survival distributions not
having this property. In the independent case, the problem is
somewhat less complicated, but still far from trivial. Intervals
are selected and discarded independently if and only if the joint
survival distribution satisfies for all $X\subseteq \A$ the
equality
\begin{equation}\label{eq:ind}
\mu(X)=\prod_{i\in X}p_i\prod_{i\not\in X}(1-p_i).
\end{equation}

An important role in the answer to the main question is played by
 the \emph{cyclic cross-correlation coefficients} (mostly simply called correlation coefficients)
\begin{align*} \label{gamma_k def}
 \gamma_{k} &:= \sum_{i=0}^{M-1} q_ip_{i+k},\quad {\rm for}\; k\in\A,
\end{align*}
where $(p_i)$ and $(q_i)$ are the vectors of marginal
probabilities of the joint survival distributions $\mu$,
respectively $\lambda$.
\ \\
\ \\
The result of \cite{DK08} needs the following condition (which is satisfied  in the independent case of Equation~(\ref{eq:ind})).
\begin{condition}\label{JSC}
A joint survival distribution $(\mu(A))_{A\subseteq \A}$ satisfies the \emph{joint survival condition} {\rm (JSC)} if it assigns
positive probability to the marginal support $\emph{Supp}_m(\mu)$ of $\mu$,
which is defined by
\begin{equation*}
\emph{Supp}_m(\mu):=\bigcup \left\{X\subseteq
\mathbb{A}:\mu(X)>0\right\}=\{i\in \A: p_i>0\}.
\end{equation*}
\end{condition}

The following result of \cite{DK08} generalizes the main theorem of \cite{DS08}.
\begin{theorem}\label{StDK08}
Consider two independent random Cantor sets $F_1$ and $F_2$ whose
joint survival distributions $\mu$ and $\lambda$ both satisfy Condition
\ref{JSC}, the {\rm (JSC)}.
\begin{enumerate}
\item If $\gamma_k >1$ for all $k \in\A$, then $F_1-F_2$
contains an interval a.s. on $\left\{F_1-F_2 \not= \emptyset
\right\}$. \item If $\gamma_k<1, \gamma_{k+1}<1$ for some $k \in\A$,
then $F_1-F_2$ contains no interval a.s.
\end{enumerate}
\end{theorem}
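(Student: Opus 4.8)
The plan is to reduce both assertions to a combinatorial statement about two random subsets of $\{0,\dots,M^n-1\}$, and then to dispatch part (2) by a first‑moment estimate and part (1) by a second‑moment one. First I would note that $F_1-F_2=\bigcap_{n\ge1}(F_1^n-F_2^n)$ by a compactness (diagonal) argument, and that each $F_1^n-F_2^n$ is a union of level‑$n$ $M$‑adic intervals. Let $A_n\subseteq\{0,\dots,M^n-1\}$ collect the left‑endpoint indices $\sum_k i_kM^{n-k}$ of the surviving level‑$n$ intervals of $F_1$, let $B_n$ do the same for $F_2$, and set $D_n:=A_n-B_n$. Since the difference of the level‑$n$ $M$‑adic intervals with indices $a,b$ is the union of those with indices $a-b-1,a-b$, the level‑$n$ $M$‑adic interval with index $c$ lies in $F_1^n-F_2^n$ iff $\{c,c+1\}\cap D_n\neq\emptyset$; hence $F_1-F_2$ contains an interval iff for some $n_0,c_0$ and every $n\ge n_0$ the set $D_n$ has no two consecutive missing integers in the block $\{c_0M^{n-n_0},\dots,(c_0+1)M^{n-n_0}\}$. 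The sets $D_n$ grow self‑similarly --- a difference $d$ realised at level $n$ by a surviving pair spawns the differences $Md+(i-j)$ over the surviving child digit pairs $(i,j)$, independently across cells --- so, writing $\phi_n(d)$ for the expected number of surviving level‑$n$ interval pairs with index difference $d$, the independence of $\mu$ and $\lambda$ gives for $r:=d'\bmod M$
\begin{equation*}
\phi_{n+1}(d')=g(r)\,\phi_n(\lfloor d'/M\rfloor)+h(r)\,\phi_n(\lfloor d'/M\rfloor+1),\qquad g(r)=\sum_{i-j=r}p_iq_j,\quad h(r)=\sum_{i-j=r-M}p_iq_j,
\end{equation*}
and a short reindexing shows $g(r)+h(r)=\sum_{j=0}^{M-1}q_jp_{(j+r)\bmod M}=\gamma_r$ --- the two ways of producing a difference in a fixed residue class mod $M$, without or with a ``borrow'', carry total weight exactly $\gamma_r$, which is where the cyclic correlation coefficients enter.

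\textbf{Part (2).} For a fixed $M$‑adic interval $J_0$ of level $n_0$ with index $c_0$ I would descend always into the child of residue $k$: let $J_\ell$ be the level‑$(n_0+\ell)$ subinterval with index $c_\ell=Mc_{\ell-1}+k$, so $c_\ell\equiv k$ and $c_\ell+1\equiv k+1\pmod M$. If $J_0\subseteq F_1-F_2$ then $\{c_\ell,c_\ell+1\}\cap D_{n_0+\ell}\neq\emptyset$ for every $\ell$, so by Markov's inequality $\mathbb{P}(J_0\subseteq F_1-F_2)\le\phi_{n_0+\ell}(c_\ell)+\phi_{n_0+\ell}(c_\ell+1)$ for all $\ell$. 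By the recursion the pair $(\phi_{n_0+\ell}(c_\ell),\phi_{n_0+\ell}(c_\ell+1))$ equals $\Gamma^\ell$ applied to $(\phi_{n_0}(c_0),\phi_{n_0}(c_0+1))$, where $\Gamma$ is a fixed nonnegative $2\times2$ matrix with row sums $\gamma_k$ and $\gamma_{k+1}$ (this closure on the two residues $k,k+1$ is the point of the ``$\{c,c+1\}$'' slack); since both row sums are $<1$ the right‑hand side decays geometrically, so $\mathbb{P}(J_0\subseteq F_1-F_2)=0$. As there are only countably many $M$‑adic intervals and every interval contains one, $F_1-F_2$ contains no interval a.s. This part is essentially routine once the $2\times2$ transfer matrix has been identified.

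\textbf{Part (1) and the main obstacle.} When $\gamma_k>1$ for all $k$, the self‑similar decomposition $F_1-F_2=\bigcup_{i,j}M^{-1}((i-j)+(F_1^{(i)}-F_2^{(j)}))$ with independent copies $F_1^{(i)},F_2^{(j)}$ yields a $0$--$1$ law for the event that $F_1-F_2$ contains an interval, conditionally on $\{F_1-F_2\neq\emptyset\}$, so it suffices to show this event has positive probability. Now $\phi_n$ grows in every residue class --- the difference process is supercritical in all directions --- but first moments are not enough, since one cannot expect a branching‑type process to fill a growing window. The hard part will be a renormalisation / second‑moment estimate showing that with positive probability the difference construction inside some $M$‑adic interval stays ``gap‑free'' (no two consecutive missing $M$‑adic subintervals) at every level; I expect to use $\gamma_k>1$ for \emph{every} $k$ to make the renormalised event supercritical, and Condition~\ref{JSC}, the {\rm (JSC)}, to exclude degeneracy --- it makes the relevant transition structure irreducible, so that Perron--Frobenius supplies a strictly positive dominant eigenvector and the second moment stays comparable to the square of the first (without it some residue classes could remain permanently unreachable even when all $\gamma_k>1$). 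Controlling the correlations between the numbers of surviving differences at neighbouring positions is the crux, and is precisely the point where the argument of \cite{DS08} must be reworked so as to run under the {\rm (JSC)} rather than under (near‑)independence of the selections.
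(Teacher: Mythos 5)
Your reduction to the sets $D_n=A_n-B_n$ and the ``no two consecutive missing integers'' criterion is sound, and your proof of part~(2) is essentially correct and complete. The $2\times2$ transfer matrix $\Gamma$ you build, with row sums $\gamma_k$ and $\gamma_{k+1}$, is (up to a transpose and a relabelling of triangle types by column indices) the expectation matrix $\exmat{k}$ of (\ref{exmat def}), whose \emph{column} sums are $\gamma_{k+1}$ and $\gamma_k$ by Lemma~\ref{exmat sum to gamma}. You should make explicit that the closure on $\{c_\ell,c_\ell+1\}$ also holds at the wrap-around residue $k=M-1$: there $c_\ell+1$ is a multiple of $M$, but the corresponding ``borrow'' term vanishes because $h(0)=\sum_{i-j=-M}p_iq_j=0$, so the second row of $\Gamma$ is $(0,\,g(0))$ with sum $\gamma_0=\gamma_{(M-1)+1}$. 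With both row sums below $1$, the decay in $\ell^\infty$-norm and the Markov bound give $\bP(J_0\subseteq F_1-F_2)=0$; countability finishes. Note that part~(2) does not use Condition~\ref{JSC} at all, which is consistent with the theorem's content --- the JSC is only needed for the interval-containing direction.

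The genuine gap is part~(1), which you have not proved. Everything from ``The hard part will be a renormalisation\,/\,second-moment estimate'' onwards is a statement of intent, not an argument: you correctly diagnose that first moments are insufficient and that the spatial correlations between survivals at neighbouring differences are the obstacle, but you offer no device to control them (``I expect to use\,\dots'', ``is precisely the point where the argument of \cite{DS08} must be reworked''). The device that \cite{DK08} actually uses --- and that the present paper sketches in the subsection on the rough proof of Theorem~\ref{StDK08} --- is geometric: one locates a $\Delta$-pair, i.e.\ a pair consisting of one $L$-triangle and one $R$-triangle lying in a common column that are \emph{unaligned}, hence stochastically independent; unalignment is exactly what kills the correlations you worry about, and self-similarity then upgrades ``positive probability'' to ``probability one given nonemptiness''. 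Your guessed role for the JSC is also off the mark: it is not used to make a transition matrix irreducible for a Perron--Frobenius/second-moment comparison, but to guarantee (Lemma~\ref{DK:lemma_1}) that one may, with positive probability, repeatedly draw the full marginal support $\mathrm{Supp}_m(\mu)$, so that both branches emanating from a $\Delta$-pair place exponentially many triangles in \emph{every} subcolumn. Without a decoupling device of this kind your proposal for~(1) does not constitute a proof; and indeed the present paper does not reprove Theorem~\ref{StDK08} --- it cites \cite{DK08} for it and devotes Section~\ref{sec:DGC} to replacing the JSC by the weaker DGC precisely at this step.
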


Obviously for $(m,M,p)$-percolation the JSC
is not satisfied, unless we are in the case $m=M$, giving positive
probability only to the full alphabet and the empty set (actually,
this is ordinary fractal percolation, where intervals
are discarded independently and the marginal probabilities $p_k$
are all equal to $p$).

\subsection{The geometry of the algebraic difference}

We will give in this subsection the tools and the notation introduced in \cite{DS08} and \cite{DK08}.

Let $\phi:[0,1]^2\to[-1,1]$ be given by $\phi(x,y)=y-x$, then $F_1-F_2=\phi(F_1 \times F_2)$.
Thus $F_1-F_2$ is  defined on the product space  of the probability spaces of $F_1$ and $F_2$.
We will use $\bP:=\Cpm\times \Cpl$ to denote the corresponding product measure
 and $\E$ to denote expectations with respect to this probability.
\\
\\
Let $F_1$ and $F_2$ be two independent $M$-adic random Cantor sets with
 joint survival distributions $\mu$ and $\lambda$, respectively.
Denote by $F_1^n$ and $F_2^n$ their $n^{\rm th}$ level approximations ($n\ge0$) and
 define the following subsets of the unit square $[0,1]^2$:
\begin{align*}
  \Lambda^n &:= F_1^n \times F_2^n, \quad n\ge0, &
  \Lambda   &:= F_1 \times F_2 = \bigcap_{n=0}^\infty \Lambda^n.
\end{align*}
Note that as $F_1^n \downarrow F_1$ and $F_2^n \downarrow F_2$, also $\Lambda^n \downarrow \Lambda$.

\begin{figure}[t!]
\centering
\includegraphics*[width =9.8cm]{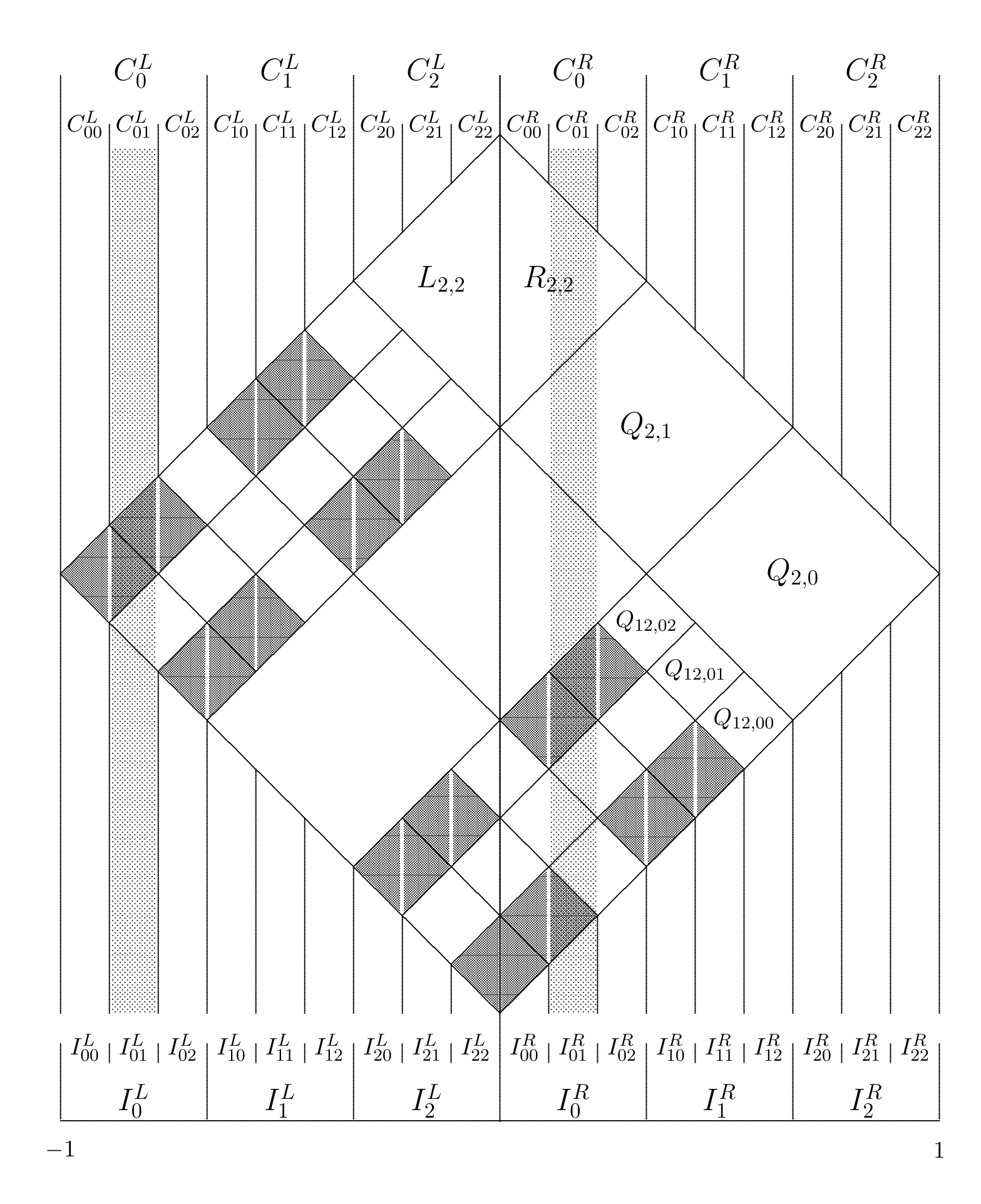}
\caption{\small
 An illustration for $M=3$ of the unit square $[0,1]^2$, scaled and rotated by 45\degrees{}.
 The shaded squares form a realization of $\Lambda^2$ for 2 out of 3 fractal percolation. The vertical  projection gives the $\phi$-image $[-1,\,5/9]$ of $\Lambda^2$.}%
\label{projection}%
\end{figure}

The $\Lambda^n$ are unions of $M$-adic squares
\begin{align*}
  Q_{\tn,\tnj} := I_{\tn} \times I_{\tnj},
\end{align*}
with $\tn,\tnj\in\T[n]$ and $n\ge0$.

 Note that $\phi$ acts as a 45\degrees{} projection on the $x$-axis. Similarly to \cite{DS08} and \cite{DK08} we scale and rotate the unit square over  45\degrees{} counterclockwise, to rather see it as a 90\degrees{} projection on $[-1,1]$. See Figure~\ref{projection} for a graphical representation of some of the squares $Q$ and their $\phi$-images.
 Here we denote the $M$-adic intervals $I_{\tn}$ in $[0,1]$ by $I_{\tn}^R$ (they are projections of squares in the right side of the tilted  square), and define
 $$I_{\tn}^L=I_{\tn}^R\!-1,$$
 for the $M$-adic intervals $I_{\tn}$ in $[-1,0]$ (they come from the left side). The \emph{columns} $C^U_\tnk$, where $U=L$ or $U=R$ are defined for each $\tnk\in\cT$ by
 $$ C^U_\tnk :=\phi^{-1}\left(I_\tnk^U\right).$$
 Note that any $n^{\rm th}$ level $M$-adic square $Q_{\tn,\tnj}$
 is split into a `left' and a `right' triangle by the $M$-adic columns.
These triangles are called $L$-triangles and $R$-triangles, and will be denoted by
$L_{\tn,\tnj}$ and $R_{\tn,\tnj}$ respectively, for any $\tn,\tnj\in\T$.
\\
\\
For all $U,V\in\set{L,R}$ and $\tnks\in\T$ we let
\begin{align*}
    Z^{UV}(\tnks)
 := \setcard{\set{\big(\tns,\tnjs\big): Q_{\tns,\tnjs}\subseteq\Lambda^n, V_{\tns,\tnjs}\subseteq C^U_{\tnks}}}
\end{align*}
denote the number of level~$n$ $V$-triangles in $\Lambda^n \cap C^U_\tnks$.
We also denote the total number of $V$-triangles in columns $C^L_{\tnks}$ and $C^R_{\tnks}$ together by
\begin{align*}
  Z^V(\tnks) := Z^{LV}(\tnks) + Z^{RV}(\tnks),
\end{align*}
for all $\tnks\in\T$. For example, in Figure~\ref{projection} we have  $Z^R(01)=1+2=3$.
\\
\\
An important observation is that an $M$-adic interval $I^U_{\tnks}$ is absent in
  $\phi(\Lambda^n)$ exactly when
 there are no triangles  in the corresponding column $C^U_{\tnks}$ in $\Lambda^n$:
\begin{align*}
  I^U_\tnks \not\subseteq \phi(\Lambda^n) \iff Z^{UL}(\tnks)=Z^{UR}(\tnks)=0.
\end{align*}

The triangle counts $Z^{UV}(\tnks)$, with $k_1,k_2,\dots$ a fixed path,
 constitute a two type branching process in a varying environment with interaction:
  the interaction comes from the dependency between triangles that are \emph{aligned},
 i.e., triangles contained in respective squares $Q_{\tn,\tnj}$ and $Q_{i_1'\dots i_n',j_1'\dots j_n'}$
 with $\tn=i_1'\dots i_n'$ or $\tnj=j_1'\dots j_n'$.
Squares that are not aligned will be called \emph{unaligned}.
\\
\\
 The \emph{expectation matrices} of the two type branching process are for $\tnks\in\T$ given by:
\begin{align} \label{exmat def}
 \exmat{\tnks} := \begin{bmatrix}
   \E Z^{LL}(\tnks) & \E Z^{LR}(\tnks) \\
   \E Z^{RL}(\tnks) & \E Z^{RR}(\tnks)
 \end{bmatrix}.
\end{align}
These matrices satisfy the basic relation
\begin{equation} \label{exmat expansion}
 \exmat{\tnk} = \exmat{k_1} \cdots \exmat{k_n},
\end{equation}
for all $\tnk\in\T$.

Lemma \ref{exmat sum to gamma} shows the importance of the correlation coefficients.


\begin{lemma} \label{exmat sum to gamma}{\rm (\cite{DS08})}
For all $k\in\A$ we have
\begin{align}\label{columnsums}
   \evec\exmat{k} = \lvec{\E Z^L(k)}{\E Z^R(k)} =\big[\gamma_{k+1}\; \gamma_{k}\big].
\end{align}
\end{lemma}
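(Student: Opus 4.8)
The plan is to unwind the first equality by definition and then reduce the second to a single level-$1$ count. The identity $\evec\,\exmat{k}=\lvec{\E Z^{L}(k)}{\E Z^{R}(k)}$ is purely formal: since $\evec=\lvec{1}{1}$, the row vector $\evec\,\exmat{k}$ records the two column sums of $\exmat{k}$, namely $\E Z^{LL}(k)+\E Z^{RL}(k)$ and $\E Z^{LR}(k)+\E Z^{RR}(k)$, and these are $\E Z^{L}(k)$ and $\E Z^{R}(k)$ by the definition $Z^{V}(\tnks)=Z^{LV}(\tnks)+Z^{RV}(\tnks)$. So it remains to compute the two expectations, and for that only the level-$1$ picture $\Lambda^{1}=F_1^{1}\times F_2^{1}$ is needed.

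Next I would determine, for a fixed $k\in\A$, which level-$1$ triangles lie in the column pair $C^{L}_{k}\cup C^{R}_{k}$. Since $\phi$ maps a square $Q_{i,j}$ onto an interval of length $2/M$, the square meets exactly two adjacent $M$-adic columns of $[-1,1]$, and these are separated precisely by the diagonal of $Q_{i,j}$ that splits $L_{i,j}$ from $R_{i,j}$. Tracking endpoints, $R_{i,j}$ lands in the column with index $j-i$ and $L_{i,j}$ in the column with index $j-i-1$, where the indices are read as elements of $\A=\Z/M\Z$ after identifying the two endpoints $\pm1$ of $[-1,1]$; this folding at the endpoints is exactly why $\A$ is given a group structure. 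Consequently, with the orientation of $\phi$ fixed above,
\begin{align*}
  Z^{R}(k)&=\setcard{\set{(i,j)\in\A^{2}:Q_{i,j}\subseteq\Lambda^{1},\ j-i=k}}, &
  Z^{L}(k)&=\setcard{\set{(i,j)\in\A^{2}:Q_{i,j}\subseteq\Lambda^{1},\ j-i=k+1}},
\end{align*}
the index differences taken in $\A$; since for each $i$ there is exactly one admissible $j$, we get $Z^{R}(k)=\sum_{i\in\A}\mathbf{1}\{Q_{i,i+k}\subseteq\Lambda^{1}\}$ and a similar formula for $Z^{L}(k)$.

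Finally I would take expectations. Under $\bP=\Cpm\times\Cpl$ the event $\set{Q_{i,j}\subseteq\Lambda^{1}}$ splits as the intersection of an event depending only on $F_1$ and one depending only on $F_2$, with probabilities equal to the corresponding marginals, one index carrying a factor from $(p_i)$ and the other a factor from $(q_i)$ (which index carries which is again fixed by the orientation of $\phi$). Using independence of $F_1$ and $F_2$ and summing over $i$,
\begin{align*}
  \E Z^{R}(k)=\sum_{i=0}^{M-1}q_{i}\,p_{i+k}=\gamma_{k},\qquad
  \E Z^{L}(k)=\sum_{i=0}^{M-1}q_{i}\,p_{i+k+1}=\gamma_{k+1},
\end{align*}
which, combined with the first step, is the assertion. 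The step I expect to be the real obstacle is the middle one: one has to carry out the triangle-to-column bookkeeping carefully, in particular check that the wrap-around at the endpoints $\pm1$ is exactly absorbed by the group structure of $\A$, and keep the index shift and the roles of $(p_i)$ and $(q_i)$ aligned so that $Z^{L}$ produces $\gamma_{k+1}$ and not, say, $\gamma_{k}$ or $\gamma_{-(k+1)}$.
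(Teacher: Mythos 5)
Your plan is the same as the paper's terse one: read off the column sums of $\exmat{k}$, then do the level-$1$ bookkeeping and apply $\bP(Q_{i,j}\subseteq\Lambda^1)=p_iq_j$. Step one is fine. The trouble is exactly where you flagged it, and it is a real (if small) gap because your two choices do not cohere. The pairing of $p$ and $q$ with the indices $i$ and $j$ is \emph{not} a free parameter tied to the orientation of $\phi$: since $\Lambda^1=F_1^1\times F_2^1$, $Q_{i,j}=I_i\times I_j$, and $p$ is the marginal of $\mu$ (the law of $F_1$), one always has $\bP(Q_{i,j}\subseteq\Lambda^1)=p_iq_j$, as the paper records. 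But with your bookkeeping claim that $R_{i,j}$ lands in the column indexed $j-i$, you get $Z^R(k)=\sum_i\mathbf{1}\{Q_{i,\,i+k}\subseteq\Lambda^1\}$, hence $\E Z^R(k)=\sum_i p_iq_{i+k}=\gamma_{-k}$, which is not $\gamma_k$ in general. Your displayed line $\E Z^R(k)=\sum_i q_ip_{i+k}$ therefore implicitly uses $\bP(Q_{i,j}\subseteq\Lambda^1)=q_ip_j$, contradicting the product structure; the two slips cancel to produce the stated answer, but neither step as written survives being checked in isolation.

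The underlying source of the sign confusion is the paper's stated $\phi(x,y)=y-x$, which is inconsistent with its own assertion $F_1-F_2=\phi(F_1\times F_2)$ (that would be $F_2-F_1$). Everything else in the paper, e.g.\ the offspring mean $\E[Y_1^R]=p_0q_{M-1}$ in the critical-case proposition, is consistent only with $\phi(x,y)=x-y$. With that orientation, $R_{i,j}$ sits in the column indexed $i-j$ and $L_{i,j}$ in the column indexed $i-j-1$ (mod $M$, with the wrap-around you correctly identified). Then $Z^R(k)=\sum_j\mathbf{1}\{Q_{j+k,\,j}\subseteq\Lambda^1\}$ and $Z^L(k)=\sum_j\mathbf{1}\{Q_{j+k+1,\,j}\subseteq\Lambda^1\}$, so $\E Z^R(k)=\sum_j p_{j+k}q_j=\gamma_k$ and $\E Z^L(k)=\sum_j p_{j+k+1}q_j=\gamma_{k+1}$ with no further adjustment. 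So: right decomposition and right endpoints, but the index shift in the middle step should be $i-j$ rather than $j-i$, after which the roles of $p$ and $q$ are forced, not a free choice governed by $\phi$.
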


\begin{proof}
As in \cite{DS08} this follows from some careful bookkeeping and
$$ \bP{(Q_{i,j} \!\subseteq \Lambda^1)}
  = \bP{(I_i \!\subseteq F^1_1, I_j \!\subseteq F^1_2)}
  = \Cprob{I_i \!\subseteq F^1_1}\!\Cprol{I_j \!\subseteq F^1_2}
  = p_iq_j. $$
\end{proof}

\subsection{Rough sketch of the proof of Theorem~\ref{StDK08}}

The idea of the proof is to
pair unaligned left and right triangles
 that survive in the \emph{same} column into what are called \emph{$\Delta$-pairs}.

Suppose we have a $\Delta$-pair in one of the columns with positive
probability. If we can prove that there is a strictly positive
probability that the number of $L$-triangles and $R$-triangles in
\emph{all subcolumns} of this column grows exponentially, then it
can be shown that with positive probability the $M$-adic interval
corresponding to this column is in the projection $\phi(\Lambda)$.
The determining quantity for exponential growth is the smallest correlation coefficient
\begin{align} \label{gamma def}
 \gamma &:= \min_{k\in\A} \gamma_{k}.
\end{align}

Now we make use of the fact that conditioned on
$\Lambda\not=\emptyset$ the Hausdorff dimension of $\Lambda$ is
almost surely larger than 1, which is implied by $\gamma>1$.

 It can
be shown (see \cite{DS08}) that from this it follows that the number
of unaligned squares grows to infinity. By self-similarity of the
process each of the unaligned squares has positive probability to
generate an interval in the projection, and hence with probability
one there will be an interval in the projection.
\\
\\
To show that a $\Delta$-pair occurs somewhere with positive
probability it suffices that $\gamma>1$. So the joint survival
condition is only needed to ensure positive probability of
exponential growth in all subcolumns of a $\Delta$-pair. For any
level $l$ $\Delta$-pair $(L^{l},R^{l})$ that is contained in a level $l$
column $C$, the distribution of the number of level $l+n$
$V$-triangles surviving in $\Lambda^{l+n}$ in the
$\underline{k}_n$-th subcolumn of $(L^{l},R^{l})$, conditional on the
survival of $(L^{l},R^{l})$ in $\Lambda^{l}$, is independent of $l$, the
particular choice of the column $C$ and the $\Delta$-pair in this column.
Therefore, we can unambigiously denote a random variable having this
distribution by
\begin{equation}
\tilde Z^V(\underline{k}_n)
\end{equation}
for all $V\in\left\{L,R\right\}$ and
$\underline{k}_n\in\mathcal{T}$. In general $\tilde
Z^V(\underline{k}_n)$ does not have the distribution of
$Z^V(\underline{k}_n)$ because there is possible dependence
between the offspring generation of two level $0$ triangles,
whereas there is no dependence between the offspring generation of
the $L$-triangle and the $R$-triangle of a $\Delta$-pair, because
they are unaligned by definition of a $\Delta$-pair. However, both
do have the same expected value.

In \cite{DK08} the following lemma on exponential
growth of triangles is proved:
\begin{lemma}\label{DK:lemma_1}
If $\gamma>1$, and the joint survival distributions satisfy the joint
survival condition, then for all $n\geq 0$
\begin{equation*}
\mathbb{P}(\tilde Z^L(\underline{k}_{l})\geq \gamma^l , \tilde Z^R(\underline{k}_{l})\geq \gamma^l \emph{\ for\ all\ } \underline{k}_l\in \cT_{l} \emph{\ for\ all\ } 0\leq
l\leq n)>0.
\end{equation*}
\end{lemma}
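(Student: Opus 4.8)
The plan is to set up an inductive / comparison argument showing that, conditional on the survival of a level~$0$ $\Delta$-pair, the two-type process $(\tilde Z^L(\underline{k}_n),\tilde Z^R(\underline{k}_n))$ has a uniformly positive probability of surviving \emph{and} growing at rate at least~$\gamma$ along every path simultaneously. First I would record the key structural facts already available: by the $\Delta$-pair construction the $L$-triangle and the $R$-triangle of the pair generate offspring \emph{independently}, each offspring generation being governed by the expectation matrices $\exmat{k}$ via the multiplicativity~(\ref{exmat expansion}); and by Lemma~\ref{exmat sum to gamma} the column sums of $\exmat{k}$ are $[\gamma_{k+1}\ \gamma_k]$, so that the relevant Perron-type eigenvalue along any path is bounded below by $\gamma=\min_k\gamma_k>1$. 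The (JSC) enters precisely here: it guarantees that with positive probability a surviving triangle produces offspring triangles in \emph{every} subcolumn that carries positive marginal mass, which is what allows the "for all $\underline{k}_l$'' quantifier to be handled — otherwise some subcolumn could be starved with probability one.

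The core step is an estimate of the form: there is an $\varepsilon>0$ and a constant $c>0$ such that, conditional on a level~$l$ $\Delta$-pair surviving in $\Lambda^l$ with $\tilde Z^L,\tilde Z^R\ge\gamma^l$ in its column, the conditional probability that \emph{every} level~$(l+1)$ subcolumn inherits a $\Delta$-pair with $\tilde Z^L,\tilde Z^R\ge\gamma^{l+1}$ is at least $1-c\,(1-\varepsilon)^{\gamma^l}$ — i.e.\ a failure probability that is super-exponentially small in the current population size. One obtains this by a second-moment / large-deviation bound: write the level-$(l+1)$ count in a given subcolumn as a sum of contributions from the $\ge\gamma^l$ currently surviving triangles; these contributions are independent within the $L$-part and within the $R$-part (and the $L$-part is independent of the $R$-part by the $\Delta$-pair property), each has mean at least $\gamma$ (column sums of $\exmat{k}$), and is bounded (at most $M$ offspring triangles per parent triangle per column). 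A Chernoff/Paley--Zygmund bound then gives that the subcolumn count falls below $\gamma^{l+1}$ — or fails to contain an unaligned pair — with probability at most exponentially small in $\gamma^l$. A union bound over the at most $(\text{const})^{l}$ many level-$(l+1)$ subcolumns, and then over all levels $0\le l\le n$, leaves a total failure probability bounded by $\sum_{l\ge 0}(\text{const})^{l} c (1-\varepsilon)^{\gamma^l}$, which is finite; choosing the seed event (existence of a suitably "fat'' level~$0$ $\Delta$-pair, which has positive probability since $\gamma>1$) with enough initial mass makes this total failure probability strictly less than the probability of the seed event, so the intersection has positive probability. Formally one should phrase this as a decreasing sequence of events $A_0\supseteq A_1\supseteq\cdots$ with $A_l$ = "growth at rate $\gamma$ in all of $\cT_l$'', show $\bP(A_{l+1}\mid A_l)\ge 1-\delta_l$ with $\sum\delta_l<\infty$ and $\prod(1-\delta_l)>0$, so $\bP(\bigcap_{l\le n}A_l)\ge \bP(A_0)\prod_{l<n}(1-\delta_l)>0$.

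The main obstacle I expect is not the branching estimate itself but the bookkeeping of the interaction between \emph{aligned} triangles within a single column: when we pass from level $l$ to level $l+1$, the offspring of a surviving level-$l$ triangle in a given subcolumn are not produced in isolation — the joint survival measures $\mu,\lambda$ couple the survival of all $M$ children of a given interval, so triangles sitting in the same row or same column are dependent. The point of working with a $\Delta$-pair (two \emph{unaligned} triangles) is to quarantine this: the two "lineages'' never share a row or a column, so they evolve as genuinely independent copies, and within each lineage the within-column dependence only helps (it cannot destroy the lower bound on the expected column sum, by Lemma~\ref{exmat sum to gamma}, and positive association or direct second-moment control keeps the variance under control). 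Making precise that "within-lineage dependence only helps'', and that the $\Delta$-pair structure is genuinely preserved down the tree with positive probability — i.e.\ that among the surviving offspring one can always re-extract an unaligned $L/R$ pair in each subcolumn — is the delicate part, and is exactly where the (JSC) is doing its work; the rest is standard large-deviation estimation for sums of bounded, mostly-independent random variables with mean $\ge\gamma>1$.
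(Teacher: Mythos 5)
The paper does not actually prove Lemma~\ref{DK:lemma_1}: it is stated as a result from \cite{DK08}, and the surrounding text says the authors instead prove the replacement Lemma~\ref{lemma_expgrowth} (with the smaller growth factor $\eta=2^{1/M}$) under the weaker DGC. So there is no in-paper proof of this exact statement to compare against. That said, both \cite{DK08} (which, per the Final Remarks, uses a combinatorial ``color lemma'') and Lemma~\ref{lemma_expgrowth} take a qualitatively different route from yours: they exhibit an explicit \emph{deterministic} realization of the first $n$ construction steps that achieves the required counts and then simply observe that this realization has positive probability. Under (JSC), $\mu(\mathrm{Supp}_m(\mu))>0$, so with positive probability one can force every subinterval in the marginal support to survive at every one of the $n$ steps; the resulting deterministic correlation coefficients are integers $\gamma^\star_k\ge\gamma_k\ge\gamma$, so growth at rate $\gamma$ along every path holds with certainty on that realization, with no concentration estimate needed at all. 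This matches the modest demand of the statement --- positivity for each fixed $n$, not a bound uniform in $n$.

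Your Chernoff/union-bound scheme aims at a uniform-in-$n$ bound, which is not needed, and it introduces gaps that you flag but do not close. The central one is the independence claim: you write that the contributions ``are independent within the $L$-part and within the $R$-part,'' but within a single lineage the level-$l$ triangles need not be pairwise unaligned --- aligned triangles share a row or a column and are genuinely dependent --- which is exactly the issue you later call ``delicate.'' Saying that ``within-lineage dependence only helps'' is not correct as probability: positive association \emph{inflates} the variance of a sum and weakens lower-tail concentration (in the extreme of perfect correlation, the lower tail does not decay at all as the population grows), so the Paley--Zygmund/Chernoff step does not go through as written. A second gap is the margin: the target rate is $\gamma=\min_k\gamma_k$, and for the minimizing $k$ the conditional mean is \emph{exactly} $\gamma$ times the current count, so there is no deviation to exploit; you would first need to establish, with positive probability, an initial surplus such as $\tilde Z^V(k)\ge\lceil\gamma_k\rceil>\gamma$ in every subcolumn --- which is precisely the deterministic-forcing step, and once you have it the rest of the Chernoff machinery is superfluous for a fixed $n$. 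Finally, the re-extraction of unaligned $\Delta$-pairs in every subcolumn at every level is also left entirely open. The strategy might be salvageable with substantially more work, but it is much harder than the statement requires, and the holes sit exactly at the points you yourself identify as delicate.
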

In Lemma~\ref{lemma_expgrowth} in Section~\ref{sec:DGC} we obtain this lemma (with a
different growth factor) under weaker conditions than the joint
survival condition.

\subsection{Higher order Cantor sets}\label{sub:high}

The idea of higher order Cantor sets is to collapse $n$ construction steps into one step. Since $\Lambda^n \downarrow \Lambda$  we can for all $n\geq 1$ write
$$
\Lambda = \bigcap_{m=1}^\infty \Lambda^m = \bigcap_{m=1}^\infty \Lambda^{nm}.
$$
The sets $(\Lambda^{nm})_{m=1}^\infty$ are constructed by joint survival distributions which will be denoted by $\mu^{(n)}$ and $\lambda^{(n)}$. If Theorem \ref{StDD09} fails to answer the interval or not question for the pair $(\mu,\lambda)$, one can hope to get an answer by considering $\Lambda$ as generated by $(\mu^{(n)},\lambda^{(n)})$.

The success of this idea is illustrated by Theorem 6.1 in \cite{DK08}, and by Theorem~\ref{St:spectral}. We will also use it for the proof of Lemma~\ref{Stperc2}.

All entities of the $n$th order random Cantor set will be denoted with a superscript $(n)$. The alphabet now is $\A^{(n)}=\left\{0,\ldots,M^n-1\right\}$ and $\mu^{(n)}$ and $\lambda^{(n)}$ are probability measures on the subsets of $\A^{(n)}$ which are completely determined by $\mu$ and $\lambda$.
\\
\\
Let us illustrate this with a simple example. Let $M=2$ and define $\mu$ by $\mu(\left\{0,1\right\})=\mu(\left\{1\right\})=1/2$. For the corresponding second order Cantor set we have $\A^{(2)}=\left\{0,1,2,3\right\}$ and
$$
\begin{array}{c}
\mu^{(2)}(\left\{0,1,2,3\right\}) = \mu^{(2)}(\left\{1,2,3\right\}) = \mu^{(2)}(\left\{0,1,3\right\}) = \mu^{(2)}(\left\{1,3\right\}) = \frac{1}{8},\\
\mu^{(2)}(\left\{2,3\right\}) = \mu^{(2)}(\left\{3\right\}) = \frac{1}{4}.
\end{array}
$$

\section{The critical case}

What happens in the critical case when $\gamma=1$? This was left
open in \cite{DS08} and \cite{DK08}. Here we will give a simple
argument, independent of the other results in this paper, that
permits us to give a complete classification in
Theorem~\ref{corrfrac-class}. In particular we can tell what
happens for critical classical fractal percolation: if
$p=1/\sqrt{M}$, then there is almost surely no interval in the
difference set.

\begin{proposition}\label{crit}
Consider two independent random Cantor sets $F_1$ and $F_2$ with
joint survival distributions $\mu$ and $\lambda$ having marginal
probabilities $(p_i)$ and $(q_j)$, such that $\gamma_0\le 1$. Then
$F_1-F_2$ contains no interval a.s., provided that for all $
i\in\A:p_iq_i\ne 1$.
\end{proposition}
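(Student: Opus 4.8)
The plan is to show that in the critical (or sub-critical) regime $\gamma_0\le 1$ the expected number of triangles in any fixed column is bounded, hence by a Borel--Cantelli / first-moment argument almost every column is eventually empty at some level, so no $M$-adic interval of any scale can survive in the projection $\phi(\Lambda)$. The hypothesis $p_iq_i\ne 1$ for all $i$ is what makes the relevant column-sum expectations strictly less than one after collapsing to higher order, which forces exponential decay rather than mere boundedness.

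First I would use Lemma~\ref{exmat sum to gamma} to read off that for the column $C^R_{\tnks}$, summed over the two triangle types, the one-step expected triangle count contributed by a single surviving square is governed by $\gamma_k$, and in particular $\E Z^R(k)=\gamma_k$, $\E Z^L(k)=\gamma_{k+1}$; the key column here is the one with $\E Z^R = \gamma_0\le 1$. By the multiplicative relation $\exmat{\tnk}=\exmat{k_1}\cdots\exmat{k_n}$ in (\ref{exmat expansion}), the vector of expected triangle counts in a depth-$n$ subcolumn is $\evec\,\exmat{k_1}\cdots\exmat{k_n}$. The obstacle is that $\gamma_0=1$ only gives a non-strict bound, and a branching process in varying environment with per-step mean exactly $1$ need not die out in expectation along every path; so a naive bound does not immediately give that every column vanishes. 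This is exactly where the condition $p_iq_i\ne 1$ enters.

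The resolution I would pursue is to pass to the second-order (or $n$-th order) Cantor set via Subsection~\ref{sub:high}: replace $(\mu,\lambda)$ by $(\mu^{(2)},\lambda^{(2)})$ and compute the second-order correlation coefficient $\gamma^{(2)}_0$. A direct computation (using $\bP(Q_{i,j}\subseteq\Lambda^1)=p_iq_j$ as in the proof of Lemma~\ref{exmat sum to gamma}) should give $\gamma^{(2)}_0=\sum_{i}\gamma_0\,p_iq_i$ or an analogous expression, which, under $\gamma_0\le1$ together with $p_iq_i<1$ for every $i$ (note $p_iq_i\le p_i\le 1$, and $\ne 1$ by hypothesis, so $p_iq_i<1$), is strictly less than $\gamma_0\le 1$, hence $\gamma^{(2)}_0<1$. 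Then I would argue that once a column-sum expectation along the constant-path $0,0,0,\dots$ in the collapsed process is $<1$, the expected number of triangles of either type in the depth-$n$ version of that column decays geometrically; summing over the finitely many columns at each level and applying Markov's inequality plus Borel--Cantelli shows that a.s. for each top-level column there is a level at which $Z^{UL}=Z^{UR}=0$. By the criterion $I^U_\tnks\not\subseteq\phi(\Lambda^n)\iff Z^{UL}(\tnks)=Z^{UR}(\tnks)=0$ and the fact that $\phi(\Lambda)=\bigcap_n\phi(\Lambda^n)$, this rules out any interval: any interval in $F_1-F_2$ would contain some $M^n$-adic interval $I^U_\tnks$ for large $n$, which is a.s. excluded.

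The main obstacle, as indicated, is the borderline nature of $\gamma_0=1$: one must extract genuine contraction, and the clean way is the higher-order collapse, where I expect the bookkeeping for $\gamma^{(n)}_0$ to be the only real computation. A subtlety to handle carefully is that the process has interaction between aligned squares, so I would phrase everything at the level of \emph{expectations} of $Z^{UV}$, which are additive and hence unaffected by the dependence (the expectation matrices (\ref{exmat def}) multiply regardless of correlations), and only invoke Markov/Borel--Cantelli at the end; this sidesteps any need to control the joint law. I would also note the degenerate cases: if some $p_i=0$ the corresponding left/right columns are trivially absent, and if $\mu(\emptyset)>0$ the dimensions drop further, so these only help.
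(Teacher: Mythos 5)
Your approach has a genuine gap: the higher-order collapse does \emph{not} extract contraction in the critical case, so the entire first-moment strategy collapses at the point where it matters most. The second-order marginals are $p^{(2)}_{i_1i_2}=p_{i_1}p_{i_2}$ and $q^{(2)}_{j_1j_2}=q_{j_1}q_{j_2}$, whence
$$\gamma_0^{(2)}=\sum_{i_1,i_2}p_{i_1}p_{i_2}\,q_{i_1}q_{i_2}=\Bigl(\sum_i p_iq_i\Bigr)^{2}=\gamma_0^{2},$$
and more generally $\gamma_0^{(n)}=\gamma_0^{n}$. When $\gamma_0=1$ this is still exactly $1$ for every $n$. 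The hypothesis $p_iq_i\ne 1$ for every $i$ does not make this strictly less than $1$: a sum of terms each strictly below $1$ can perfectly well add up to $1$, and $\sum_i p_iq_i=\gamma_0$ does. Consequently the expected number of triangles in the central column along $0^n$ stays at $1$ forever, Markov's inequality gives only the trivial bound, and there is nothing for Borel--Cantelli to sum. This is precisely what makes the critical case delicate: criticality cannot be broken by any bookkeeping with expectation matrices, because those matrices only encode first moments.

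The paper's proof uses a qualitatively different input --- almost-sure extinction of a \emph{critical, non-degenerate} branching process --- which is exactly what an expectation-only argument cannot see. The key observation is that the ``central'' squares $Q_{\tn,\tn}$ are pairwise unaligned, so $Z_n=\#\{\tn : Q_{\tn,\tn}\subseteq\Lambda^n\}$ is a genuine Galton--Watson process with mean offspring $\gamma_0\le 1$. The role of the condition $p_iq_i\ne 1$ is not to push a correlation coefficient strictly below $1$ but to rule out the degenerate offspring law $Z_1\equiv 1$, so that the critical process dies out a.s.\ at some finite time $N$. After that, the column $C^R_{0^{N+n}}$ contains only left triangles and $C^L_{(M-1)^{N+n}}$ only right triangles; these counts are branching processes with mean offspring $p_0q_{M-1}$ and $p_{M-1}q_0$ respectively, and both cannot equal $1$ without forcing $\gamma_0\ge 2$, so at least one is strictly subcritical and dies out, creating a gap adjacent to $0$. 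Self-similarity and the density of $M$-adic points then exclude any interval. If you want to repair your write-up, you would have to replace the higher-order/first-moment step with an appeal to extinction of critical branching processes and a construction of an actual gap; the remainder of your outline (reading off column sums via Lemma~\ref{exmat sum to gamma}, and the criterion $I^U_\tnks\not\subseteq\phi(\Lambda^n)\iff Z^{UL}(\tnks)=Z^{UR}(\tnks)=0$) is consistent with the paper but is not where the difficulty lies.
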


\textbf{Proof.}
Let $Z_n$ be the number of `central' squares in $\Lambda^n$, i.e.,
$$Z_n=\#\{\tn\in \cT: Q_{\tn,\tn}\in\Lambda^n\}.$$
Then $Z_0=1$, and since these central squares are unaligned,
$(Z_n)$ is an ordinary branching process with mean offspring
$$\E[Z_1]=p_0q_{0}+p_1q_{1}+...+p_{M-1}q_{M-1}=\gamma_0\le 1.$$
Now if $\gamma_0=1$, then  the offspring distribution is
deterministic  ($Z_1\equiv 1$) if and only if  $p_i=q_i=1$ for
some $i\in  \A$, which is assumed \emph{not} to be the case.
Hence, $(Z_n)$ will die out a.s., say at time $N$. In the sequel
we will write the string $\tn=(k,k,\dots, k)$ for $k \in \A$ as
$k^n$.
\\
Then,  because there are no central squares left, $C^R_{0^{N+n}}$
only contains left triangles for all $n\ge 0$. Moreover, the
number of left triangles in $(C^R_{0^{N+n}})$ is an ordinary
branching process $(Y_n^R)$ with random initial distribution
$Y_0^R$, and mean offspring
$$\E[Y_1^R]=p_0q_{M-1}\le 1.$$
Similarly, $C^L_{(M-1)^{N+n}}$ only contains right triangles for
all $n\ge 0$. Moreover, the number of right triangles in
$(C^L_{(M-1)^{N+n}})$ is a branching process $(Y_n^L)$ with
$Y_0^L$, and mean offspring
$$\E[Y_1^L]=p_{M-1}q_{0}\le 1.$$
If both $\E[Y_1^R]$ and $\E[Y_1^L]$ would equal $1$, then
$p_0q_{M-1}=p_{M-1}q_{0}=1$ and consequently
$p_0q_0=p_{M-1}q_{M-1}=1$ implying that $\gamma_0\geq 2$. Hence
either $\E[Y_1^R]<1$ or $\E[Y_1^L]<1$, such that at least one of
the two branching processes $(Y_n^R)$ and $(Y_n^L)$ will die out
almost surely, implying that $F_1-F_2$ has a `gap' directly left
or right of $0$. It then follows from selfsimilarity and the
denseness of the points $k_1M^{-1}+\dots+k_nM^{-n}$ that $F_1-F_2$
contains no interval a.s. (cf. \cite{DS08})\hfill $\Box$
\\
\\
That we need at least some restriction on the marginal
probabilities in addition to the requirement $\gamma_0\le 1$ is
shown in the following example: Let $M=2$ and define the
(deterministic) joint survival distributions $\mu$ and $\lambda$
by setting $\mu(\left\{0\right\})=1$ and
$\lambda(\left\{0,1\right\})=1$. Then $F_1\times F_2 =
\left\{0\right\}\times [0,1]$, and so  $F_1-F_2=[-1,0]$.

\section{The distributed growth condition}\label{sec:DGC}

In this section we introduce a condition for exponential growth of
triangles which is based on the following idea: if we can find a
column $C$ where we have a sufficient number of $\Delta$-pairs,
then under some conditions each of these $\Delta$-pairs can be
used to guarantee exponential growth of triangles in a proper
subset of the set of subcolumns of $C$. In some sense we `spread
the burden of proof', and this gives the condition a flexible
nature. This is illustrated by the fact that with help of this
condition, we can completely classify correlated fractal
percolation.
\\
\\
For $X,Y\subseteq\A$ and $e\in\A$ we define $\gamma_e(X,Y)$ to be
the $e^{\rm th}$ correlation coefficient corresponding to the
joint survival distributions $\mu^\star$ and $\lambda^\star$
assigning probability one to $X$ and $Y$ respectively, i.e.,
\begin{equation}\label{gammak}
\gamma_e(X,Y) = \sum_{i\in \A} \ind{Y}(i)\ind{X}(i+e).
\end{equation}

\begin{condition}\label{DGC} The pair of joint survival distributions $(\mu,\lambda)$
satisfies the \emph{distributed growth condition} if for all $k\in\A$
we can find sets $X_k,Y_k\subseteq\A$  such that
\begin{itemize}
\item[(DG0)]\quad $\mu(X_k)>0$ and $\lambda(Y_k)>0$, \\[-.35cm]
\item[(DG1)]\quad $\displaystyle\min_{e\in \A} \gamma_e(X_k,Y_k) \geq 1$,
\item[(DG2)]\quad $\gamma_k(X_k,Y_k)\geq 2,\; \gamma_{k+1}(X_k,Y_k)\geq 2$.\\[-.35cm]
\end{itemize}
\end{condition}

\begin{lemma}\label{lemma_DG3}
Let $E$ denote the event that there exists $l\geq 1$,
$\underline{k}_l\in\mathcal{T}_l$ and $U\in\left\{L,R\right\}$
such that $C_{\underline{k}_l}^U$ contains at least $M$ left and
$M$ right triangles which are all pairwise unaligned. If the pair
of joint survival distributions $(\mu,\lambda)$ satisfies the DGC,
then
$$
\bP(E)>0.
$$
\end{lemma}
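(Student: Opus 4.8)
The plan is to fix one letter $k\in\A$, use the sets $X_k,Y_k$ that the DGC provides for this $k$, and condition so that at every branching of $F_1$ down to some level $l$ the surviving set is exactly $X_k$, and at every branching of $F_2$ it is exactly $Y_k$. By (DG0) each branching contributes a factor $\mu(X_k)\lambda(Y_k)>0$; only finitely many are involved, so this event has positive probability, and on it $\Lambda^l$ is the deterministic union of the squares $Q_{\underline{i},\underline{j}}$ with $\underline{i}$ in the $l$-fold concatenation of $X_k$ and $\underline{j}$ in that of $Y_k$. It then suffices to show that, for a suitable $l$, this deterministic $\Lambda^l$ has a level-$l$ column containing $M$ pairwise unaligned left triangles and $M$ pairwise unaligned right triangles. (One may instead condition only the subtrees below a carefully chosen finite family of pairwise unaligned squares and exploit their independence, but full conditioning down to level $l$ is the simplest route.)

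\textbf{Growth from (DG1)--(DG2).} The first ingredient is the translation of (DG1) and (DG2) into growth of triangle counts. For the deterministic $(X_k,Y_k)$-process the expectation matrices $\exmat{\tnks}$ of the two-type branching process of triangle counts become nonnegative integer matrices; by Lemma~\ref{exmat sum to gamma} the column sums of $\exmat{e}$ are $[\gamma_{e+1}(X_k,Y_k),\gamma_e(X_k,Y_k)]$, which are $\ge 1$ for every $e$ by (DG1) and $\ge 2$ at $e=k$ by (DG2). Hence $\evec\exmat{k}\ge 2\,\evec$ entrywise, and by the multiplicativity~(\ref{exmat expansion}) and nonnegativity $\evec\exmat{k^l}=\evec\exmat{k}^l\ge 2^l\,\evec$. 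So along the path $k,k,\dots$ the number of left triangles, and the number of right triangles, of $\Lambda^l$ contained in the two level-$l$ columns $C^L_{k^l}$ and $C^R_{k^l}$ grows at least like $2^l$, while (DG1) keeps every column from emptying out as one descends.

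\textbf{From ``many triangles'' to ``many pairwise unaligned triangles of both types''.} Two distinct squares feeding triangles into a common column lie on a common diagonal $\{\,\underline{j}-\underline{i}=\mathrm{const}\,\}$ (for their left triangles) or on two neighbouring diagonals (one left, one right); squares on a single diagonal have pairwise distinct row labels and pairwise distinct column labels, hence are automatically pairwise unaligned. Thus the left triangles in a column are automatically mutually unaligned, likewise the right triangles, and the only alignments left to destroy are those between a left triangle and a right triangle --- and a left-feeding square is aligned with precisely the two right-feeding squares sharing, respectively, its row and its column, i.e.\ the left-square with row label $\underline a$ is aligned exactly with the right-squares with row labels $\underline a$ and $\underline a+1$. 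It therefore suffices to descend deep enough that, for a suitable level-$l$ column, its left-feeding diagonal and its right-feeding diagonal each carry $\gtrsim 3M$ surviving squares (this is what the exponential growth above buys, once the two types are funneled into one column --- see below), and then to keep any $M$ left-squares with row-label set $A$ and any $M$ right-squares with row-label set $B\subseteq R_R\setminus(A\cup(A+1))$; such $B$ exists since $|A\cup(A+1)|\le 2M$. The resulting $2M$ triangles are pairwise unaligned and lie in one level-$l$ column, so on the positive-probability conditioning event the event $E$ occurs, and $\bP(E)>0$.

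\textbf{Main obstacle.} The delicate point linking the two ingredients is that the left triangles produced by the growth of $\gamma_{k+1}$ and the right triangles produced by the growth of $\gamma_k$ need not land in one common column: the cyclicity of the correlation coefficients pairs the level-$1$ column $C^R_c$ with $C^L_c$ through a ``wrap-around'', and at each level the column counts split in two accordingly. One must therefore choose the depth $l$ and the target column so that both triangle types are funneled into a single deep column still carrying $\gtrsim 3M$ surviving squares on each of the two relevant diagonals. This is cleanest in the higher-order Cantor set language of Subsection~\ref{sub:high}, collapsing several construction steps into one so that a single (DG1)+(DG2)-type estimate already delivers long surviving diagonals in a common column; once that is arranged, the branching-process growth estimate, the unalignment bookkeeping along a diagonal, and the positivity of the conditioning events are routine.
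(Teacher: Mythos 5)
Your setup (condition the whole tree down to level $l$ so that $F_1$ survives exactly to $X_k$ and $F_2$ to $Y_k$ at every branching, which has positive probability by (DG0)), your growth bound $\evec\exmat{k}^l\ge 2^l\,\evec$ from (DG2), and your unalignment bookkeeping (all $L$-triangles in a column come from squares on one anti-diagonal and are automatically pairwise unaligned, an $L$-square is aligned with at most two $R$-squares, so $\gtrsim 3M$ of each type yields $M+M$ pairwise unaligned) are all sound, and the last point in fact makes explicit a step the paper leaves implicit. However, your own ``Main obstacle'' paragraph identifies a genuine gap that you then do not close: $[1\;1]\exmat{k^l}\ge[2^l\;2^l]$ only controls the \emph{sum} of the triangle counts over the two columns $C^L_{k^l}$ and $C^R_{k^l}$, and a priori all the $L$-triangles could sit in one of them and all the $R$-triangles in the other, in which case no single column carries $M$ of each. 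Waving at higher-order Cantor sets does not resolve this: collapsing steps just reproduces the same two-column split at the coarser scale, and you give no mechanism that forces the two types into a common column.

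The paper's fix is short and you are missing it: take $k=0$ and use $X_0,Y_0$. The column $C^L_{0^n}=\phi^{-1}\bigl([-1,-1+M^{-n}]\bigr)$ is the extreme corner sliver of the tilted square; it can contain at most one $L$-triangle (from $Q_{(M-1)^n,0^n}$) and no $R$-triangles. Hence the first row of $\mathcal{M}^\star(0^n)$ is bounded above by $[1\;\;0]$, and subtracting it from the column sums $\ge[2^n\;\;2^n]$ shows that \emph{both} entries of the second row --- the numbers of $L$- and of $R$-triangles in the single column $C^R_{0^n}$ --- are at least $2^n-1$. That forces both triangle types into one column, after which your unalignment selection and the positive probability of the conditioning event finish the proof. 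So the skeleton of your argument is right, but the one idea that actually closes the gap --- that for $k=0$ one of the two sibling columns is degenerate, so the (DG2)-growth has nowhere to go but the other --- is absent.
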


\textbf{Proof.} Choose $X_0,Y_0\subseteq \A$ according to the DGC.
Define the joint survival distributions $\mu^\star$ and
$\lambda^\star$ by $\mu^\star(X_0)=\lambda^\star(Y_0)=1$. Then by
(DG2) both column sums of the expectation matrix
$\mathcal{M}^\star(0)$ are at least $2$, implying that
$$
[1\quad 1]\,\mathcal{M}^\star(0^n) \geq [2^n\quad 2^n],
$$
elementwise. The first row of $\mathcal{M}^\star(0^n)$ corresponds
to $C^L_{0^n}$, which can contain at most one left triangle and no
right triangles. Therefore, both numbers in the second row of
$\mathcal{M}^\star(0^n)$ are bounded below by $2^n-1$. It follows
that the numbers of left and right triangles in $C^R_{0^n}$ grow
arbitrary large if $n$ is sufficiently large. Since $\mu$ and
$\lambda$ assign positive probability to $X_0$ and $Y_0$
respectively, the statement of the lemma follows.\hfill $\Box$
\\
\\
We can now formulate our exponential growth lemma.

\begin{lemma}\label{lemma_expgrowth}
If the pair of joint survival distributions $(\mu,\lambda)$
satisfies the distributed growth condition, then there exist
$l\geq 1$, $\underline{k}_l\in\mathcal{T}_l$ and $\eta>1$ such
that for all $n\geq 0$
$$
\mathbb{P}(Z^L(\underline{k}_l\underline{k}_p)\geq
\eta^p,Z^R(\underline{k}_l\underline{k}_p)\geq \eta^p \emph{\ for\
all\ } \underline{k}_p\in\mathcal{T}_p \emph{\ for\ all\ } 0\leq
p\leq n)>0.
$$
\end{lemma}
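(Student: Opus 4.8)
The plan is to bootstrap from Lemma~\ref{lemma_DG3}. By that lemma there is, with positive probability, a level $l\geq 1$, a word $\underline{k}_l\in\mathcal{T}_l$ and a side $U\in\{L,R\}$ such that the column $C:=C^U_{\underline{k}_l}$ contains many pairwise unaligned left triangles and many pairwise unaligned right triangles, all of the involved squares being pairwise unaligned; in fact the proof of Lemma~\ref{lemma_DG3} produces, by taking the refinement depth $n$ there large enough, as many such triangles as we wish. So we may fix a large integer $N_0$ (we will need $N_0\geq 2M$) and condition on the positive‑probability event that $C$ contains $N_0$ pairwise unaligned squares carrying an $L$‑triangle in $C$ and $N_0$ pairwise unaligned squares carrying an $R$‑triangle in $C$, all $2N_0$ squares being pairwise unaligned. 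We then aim to show that, conditionally on this event, there is positive probability that for every $p\geq 0$ and every $\underline{k}_p\in\mathcal{T}_p$ the subcolumn $C_{\underline{k}_p}:=C^U_{\underline{k}_l\underline{k}_p}$ contains at least $\eta^p$ left triangles and at least $\eta^p$ right triangles, where $\eta:=1+\tfrac1{2M}>1$; this gives the lemma.

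The heart of the argument will be an inductive construction of a witnessing family. We maintain, for each level $p$ and each $\underline{k}_p$, a set $\mathcal{S}_{\underline{k}_p}$ of at least $N_0\eta^p$ pairwise unaligned squares, each carrying a triangle (of one of the two types) in $C_{\underline{k}_p}$; for $p=0$ this is the seed just described. Given the families at level $p$, we split $\mathcal{S}_{\underline{k}_p}$ into $M$ groups $G_0,\dots,G_{M-1}$ of almost equal size, make $G_e$ responsible for the $e$‑th subcolumn $C_{\underline{k}_p e}$, and condition on the positive‑probability event that every square in $G_e$ refines with surviving first coordinates $X_e$ and surviving second coordinates $Y_e$, where $(X_e,Y_e)$ are the sets furnished by the distributed growth condition for the index $e$ (condition (DG0) guarantees $\mu(X_e)\lambda(Y_e)>0$). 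A bookkeeping computation — the deterministic analogue of Lemma~\ref{exmat sum to gamma}, as in \cite{DS08} — shows that a square refined with $X_e\times Y_e$ puts into the $e'$‑th subcolumn of a column containing one of its triangles a number of children carrying an $L$‑triangle there, and a number carrying an $R$‑triangle there, each bounded below by one of $\gamma_{e'}(X_e,Y_e)$ or $\gamma_{e'+1}(X_e,Y_e)$; hence by (DG2) it contributes at least $2$ children of each type to its responsible subcolumn $C_{\underline{k}_p e}$, and by (DG1) at least $1$ of each type to every other subcolumn of $C_{\underline{k}_p}$. Since the squares in $\mathcal{S}_{\underline{k}_p}$ are pairwise unaligned their children are automatically pairwise unaligned across distinct parents, and among the children of a single parent one can select a pairwise unaligned subfamily of the required size — this is where the presence of both $\gamma_k$ and $\gamma_{k+1}$ in (DG2) is used, giving room on both sides of the straddled diagonal. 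Tallying, each subcolumn $C_{\underline{k}_p e}$ inherits at least $2|G_e|+\sum_{e'\neq e}|G_{e'}|=|G_e|+|\mathcal{S}_{\underline{k}_p}|\geq |\mathcal{S}_{\underline{k}_p}|\bigl(1+\tfrac1M\bigr)-1\geq \eta\,|\mathcal{S}_{\underline{k}_p}|$ pairwise unaligned fertile squares (the last inequality using $|\mathcal{S}_{\underline{k}_p}|\geq N_0\geq 2M$), so $\mathcal{S}_{\underline{k}_p e}$ of size at least $N_0\eta^{p+1}$ exists, and the induction closes; every such fertile square carries an $L$‑triangle or an $R$‑triangle in $C_{\underline{k}_p e}$, and by the preceding count both $Z^L(\underline{k}_l\underline{k}_{p+1})$ and $Z^R(\underline{k}_l\underline{k}_{p+1})$ are bounded below by $N_0\eta^{p+1}\geq\eta^{p+1}$.

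For a fixed $n$ the whole construction conditions only on the refinement of the finitely many squares occurring in the families $\mathcal{S}_{\underline{k}_p}$ with $p<n$, and each such conditioning has probability at least $\prod_{e}\bigl(\mu(X_e)\lambda(Y_e)\bigr)$ raised to a finite power, which is strictly positive by (DG0). Hence the event in the lemma has positive probability for every $n$, which is exactly what is asserted. (If one wanted a bound uniform in $n$, hence growth for all $p$ simultaneously, one would replace the full conditioning at each level by a concentration argument that only a positive fraction of the squares in each group refine correctly; the summability of the resulting error probabilities, since the counts grow geometrically, would then give a positive lower bound independent of $n$ — but this is not needed here.)

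The main obstacle I anticipate is the bookkeeping in the inductive step: one must make precise how the children of a square refined with a prescribed pair of survival sets distribute, by triangle type, over the subcolumns of the two columns the parent straddles, keeping careful track of the $\pm1$ shift (the ``carry'') that is exactly the reason (DG2) has to demand $\gamma_k\geq 2$ \emph{and} $\gamma_{k+1}\geq 2$, and at the same time to check that enough of those children can be picked pairwise unaligned so that the conditioning events at successive levels remain independent and the invariant on $\mathcal{S}_{\underline{k}_p}$ is genuinely preserved. All of this is a quantitative sharpening of the triangle counting already carried out in \cite{DS08} and \cite{DK08}, so I expect it to be routine but somewhat lengthy.
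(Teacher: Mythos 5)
Your plan is genuinely different from the paper's. The paper fixes the $M$ unaligned $\Delta$-pairs once, assigns the $k$-th pair the deterministic refinement $(X_k,Y_k)$ forever, notes that the $k$-labelled pair contributes at least $2^{\#\{j\le p\,:\,k_j=k\}}$ triangles of each type to every subcolumn $\underline{k}_l\underline{k}_p$, and finishes with the pigeonhole bound $\max_k\#\{j\,:\,k_j=k\}\ge\lceil p/M\rceil$, giving $\eta=2^{1/M}$; no re-selection of unaligned families is ever performed, so the only place pairwise unalignedness is needed is the initial seed from Lemma~\ref{lemma_DG3}. Your proposal instead re-groups and re-selects at every level, and this is where it breaks.

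The decisive gap is the claim that a \emph{single} square refined with $X_e\times Y_e$ contributes, to each subcolumn $e'$, at least one $L$-child and at least one $R$-child, ``each bounded below by $\gamma_{e'}(X_e,Y_e)$ or $\gamma_{e'+1}(X_e,Y_e)$.'' This is not what (DG1)/(DG2) control. By Lemma~\ref{exmat sum to gamma} the quantities $\gamma_{e'}$ and $\gamma_{e'+1}$ are the \emph{column sums} of $\exmat{e'}$, i.e.\ the totals produced by an $L$-parent \emph{and} an $R$-parent together (a $\Delta$-pair). One square carries only one of its two triangles in the column $C_{\underline{k}_p}$, so it contributes only a single \emph{row} of $\exmat{e'}$, and individual entries of $\exmat{e'}$ can vanish while the column sum does not. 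Concretely, for $M=3$, $X=\{0,1\}$, $Y=\{1,2\}$, the $L$-children of an $L$-parent in subcolumn $0$ of its column are the pairs $(a,b)\in X\times Y$ with $a-b=M-1$, of which there are none, whereas an $R$-parent contributes all of $\gamma_2(X,Y)=2$. So your per-square invariant is false and the inductive tally $2|G_e|+\sum_{e'\ne e}|G_{e'}|$ is not justified. The natural repair — replace ``square'' by ``$\Delta$-pair'' — runs into the second difficulty you flag yourself: the offspring of one $\Delta$-pair in one subcolumn need not be pairwise unaligned (siblings sharing a first or second coordinate are aligned), and to close the induction you would need to extract from them enough \emph{unaligned} $\Delta$-pairs, matching $L$- with $R$-children, which is exactly the bookkeeping you defer. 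The paper sidesteps both issues by never re-pairing: it simply sums over the $M$ fixed unaligned $\Delta$-pairs the deterministic triangle counts inside each, so no alignment questions arise after level~$l$. Your route is not unsalvageable, but as written it does not prove the lemma, and even if repaired it yields a weaker growth rate $\eta\approx 1+\tfrac1{2M}$ than the paper's $2^{1/M}$.
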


\textbf{Proof.} Choose $n\geq 0$ arbitrary. For all $k\in\A$
choose $X_k\subseteq\A$ and $Y_k\subseteq\A$ such that these sets
satisfy the DGC. Define the joint survival distributions
$\mu_k^\star$ and $\lambda_k^\star$ by requiring that
$\mu_k^\star(X_k)=\lambda_k^\star(Y_k)=1$.

Let $k\in\A$ be fixed and consider the expectation matrices
corresponding to the triangle growth process defined by
$(\mu_k^\star$,$\lambda_k^\star)$. By (\ref{columnsums}), their
column sums are given by the correlation coefficients
corresponding to the pair of joint survival distributions
$(\mu_k^\star,\lambda_k^\star)$. So, for all $e\in\A$, both column
sums of $\mathcal{M}_k^\star(e)$ are at least $1$ and both column
sums of $\mathcal{M}_k^\star(k)$ are at least $2$. Let $p$ be an
integer with $0\leq p\leq n$. Since for $\underline{k}_p =
k_1\ldots k_p \in\mathcal{T}_p$ we have
\begin{equation*}
\mathcal{M}_k^\star(\underline{k}_p) =
\mathcal{M}_k^\star(k_1)\ldots\mathcal{M}_k^\star(k_p),
\end{equation*}
it follows that a lower bound for the column sums of
$\mathcal{M}_k^\star(\underline{k}_p)$ is determined by the number
of $k$'s in the string $\underline{k}_p$. We obtain (omitting the
dependence on $k$, and writing $k_j$ for the $j$th element in the
string $\underline{k}_p$.)
\begin{equation*}
\gamma^\star_{\underline{k}_p} \geq 2^{\#\left\{0\le j\le p: k_j=k\right\}},\quad
\gamma^\star_{\underline{k}_p+1} \geq 2^{\#\left\{0\le j\le p: k_j=k\right\}}.
\end{equation*}
From the deterministic nature of $\mu_k^\star$ and
$\lambda_k^\star$, it follows that the expectation of the number
of triangles in some column is simply the number that will occur.
This means that for all $0\leq p\leq n$
\begin{eqnarray*}
Z_k^{L;\star}(\underline{k}_p) &=& \E[Z_k^{L;\star}(\underline{k}_p)] = \gamma^\star_{\underline{k}_p+1} \geq 2^{\#\left\{0\le j\le p: k_j=k\right\}},\\
Z_k^{R;\star}(\underline{k}_p) &=& \E[Z_k^{R;\star}(\underline{k}_p)] = \gamma^\star_{\underline{k}_p} \geq 2^{\#\left\{0\le j\le p: k_j=k\right\}}.
\end{eqnarray*}
Since $(\mu,\lambda)$ satisfies the DGC, we can by Lemma
\ref{lemma_DG3} find an $l$-adic column $C_{\underline{k}_l}^U$
containing with strictly positive probability at least $M$ left-
and $M$ right triangles being all pairwise unaligned. Let this
event be denoted by $E$ and abbreviate the notation of this column
by $C$ and its subcolumns $C_{\underline{k}_l\underline{k}_p}^U$
by $C_{\underline{k}_p}$.
\\
\\
Now suppose we have a $\Delta$-pair $(L,R)$ in $C$, in which the
growth process behaves according to the pair of joint survival
distributions $(\mu_k^\star,\lambda_k^\star)$. Then, for all $p$
and all subcolumns $C_{\underline{k}_p}$ of $C$, both the number
of left and the number of right triangles in
$C_{\underline{k}_p}\cap (L\cup R)$ is at least $2^{\#\left\{0\le
j\le p:k_j=k\right\}}$.
\\
\\
Conditional on the event $E$, we have $M$ left and right triangles
in $C$. We can label them by the elements of $\A$ such that we
have $M$ $\Delta$-pairs. These $2M$ triangles are all pairwise
unaligned (also if they belong to different $\Delta$-pairs) and
hence there is completely no dependence between these triangles.
It follows that it is possible that in each of the $\Delta$-pairs
the growth process takes place as prescribed by $\mu_k^\star$ and
$\lambda_k^\star$, where $k$ is the label of the $\Delta$-pair.
Denoting the event that this happens in the first $n$ construction
steps after occurrence of $E$ by $E_n$, we can find a strictly
positive lower bound for $\mathbb{P}(E_n|E)$:
\begin{equation*}
\bP(E_n|E) \geq \prod_{k\in\A} \mu(X_k)^{\sum_{j=1}^n (\#
X_k)^{j-1}} \lambda(Y_k)^{\sum_{j=1}^n (\# Y_k)^{j-1}} > 0.
\end{equation*}
Let $0\leq p\leq n$ and let $C_{\underline{k}_p}$ be an arbitrary
$M^p$-adic subcolumn of $C$. There must exist a
$k=k(\underline{k}_p)\in\A$ such that $\#\left\{0\le j\le
p:k_j=k\right\} \geq \lceil \frac{p}{M}\rceil$. Hence, given the
event $E_n$, for the numbers of left and right triangles in
$C_{\underline{k}_p}$ we have
\begin{equation*}
Z^L(\underline{k}_l\underline{k}_p)\geq 2^{\lceil\frac{p}{M}\rceil},\quad
Z^R(\underline{k}_l\underline{k}_p)\geq 2^{\lceil\frac{p}{M}\rceil}.
\end{equation*}
Taking $\eta = \sqrt[M]{2}$, we obtain
\begin{eqnarray*}
\quad\mathbb{P}(Z^L(\underline{k}_l\underline{k}_p)\geq \eta^p,Z^R(\underline{k}_l\underline{k}_p)\geq \eta^p \mbox{\ for\ all\ }\underline{k}_p\in\mathcal{T}_p \mbox{\ for\ all\ }0\leq p\leq n)&&\\
\geq \mathbb{P}(E)\mathbb{P}(E_n|E)>0.\hspace{7cm}&&
\end{eqnarray*}
\hfill $\Box$
\\
\\
Collecting the results established so far, we can replace the
joint survival condition (Condition \ref{JSC}) and Lemma
\ref{DK:lemma_1} by the distributed growth condition and Lemma
\ref{lemma_expgrowth} to obtain the following useful variation on
Theorem \ref{StDK08}:

\begin{theorem}\label{StDD09}
Consider two independent random Cantor sets $F_1$ and $F_2$ whose
joint survival distributions satisfy Condition \ref{DGC}, the
{\rm DGC}.
\begin{enumerate}
\item If $\gamma_k >1$ for all $k \in \A$, then $F_1-F_2$
contains an interval a.s. on $\left\{F_1-F_2 \not= \emptyset
\right\}$. \item If $\gamma_k<1, \gamma_{k+1}<1$ for some $k \in\A$,
then $F_1-F_2$ contains no interval a.s.
\end{enumerate}
\end{theorem}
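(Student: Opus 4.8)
The plan is to derive Theorem~\ref{StDD09} as a direct consequence of the machinery already assembled, essentially by substituting our new ingredients into the proof architecture of Theorem~\ref{StDK08}. For part (1), the argument divides into two halves exactly as in the sketch of Subsection on the rough proof of Theorem~\ref{StDK08}. First I would invoke Lemma~\ref{lemma_expgrowth}: since $(\mu,\lambda)$ satisfies the DGC, there exist $l\ge 1$, $\underline{k}_l\in\cT_l$ and $\eta>1$ so that with strictly positive probability the triangle counts $Z^L(\underline{k}_l\underline{k}_p)$ and $Z^R(\underline{k}_l\underline{k}_p)$ all exceed $\eta^p$ in every subcolumn, for every $0\le p\le n$, and this holds simultaneously for all $n$ by a compactness/martingale-limit argument (the events are decreasing in $n$, so their intersection has positive probability). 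On this event the $M$-adic interval $I^U_{\underline{k}_l}$ is contained in $\phi(\Lambda)$: indeed $Z^{UL}(\underline{k}_l\underline{k}_p)$ and $Z^{UR}(\underline{k}_l\underline{k}_p)$ never both vanish, so by the observation $I^V_\tnks\not\subseteq\phi(\Lambda^n)\iff Z^{VL}(\tnks)=Z^{VR}(\tnks)=0$ no descendant subinterval is ever deleted, and a standard argument (as in \cite{DS08}) shows the limit interval survives. Hence an interval occurs in $\phi(\Lambda)=F_1-F_2$ with positive probability.

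Second, I would upgrade ``positive probability'' to ``almost surely on $\{F_1-F_2\ne\emptyset\}$''. The hypothesis $\gamma_k>1$ for all $k$ forces $\gamma=\min_k\gamma_k>1$, which (as recalled in the sketch, citing \cite{DS08}) implies that conditioned on $\Lambda\ne\emptyset$ the Hausdorff dimension of $\Lambda$ exceeds $1$ almost surely, and from this the number of unaligned squares in $\Lambda^n$ grows to infinity a.s. on $\{\Lambda\ne\emptyset\}$. Each unaligned square is a scaled, independent copy of the whole picture, so by self-similarity each has the same positive probability (from the first half) of generating an interval in the projection; a Borel--Cantelli / zero--one argument along the infinitely many unaligned squares then yields an interval almost surely. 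This half of the argument is verbatim the one in \cite{DS08} and \cite{DK08} and requires no change, because it never used the (JSC): the only place the (JSC) entered was in guaranteeing exponential growth inside a $\Delta$-pair, and that role is now played by Lemma~\ref{lemma_expgrowth}.

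For part (2), the condition $\gamma_k<1$ and $\gamma_{k+1}<1$ for some $k$ is precisely what is needed to run the ``gap'' argument. By Lemma~\ref{exmat sum to gamma} the expected numbers of $R$-triangles and $L$-triangles in the column $C_k$ (over all of $\Lambda^n$) are $\E Z^R(k)=\gamma_k<1$ and $\E Z^L(k)=\gamma_{k+1}<1$; these triangle counts dominate subcritical branching-type processes, so with the usual extinction argument the column $C_k^U$ is eventually empty for both $U\in\{L,R\}$, hence $I_k^U\not\subseteq\phi(\Lambda)$: there is a gap. By self-similarity and the density of the $M$-adic points the same happens inside every subinterval, so $\phi(\Lambda)=F_1-F_2$ contains no interval almost surely. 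This is again exactly the argument of \cite{DS08}, and notably it never needed the (JSC) either, so it transfers unchanged.

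The main obstacle is the first half of part (1): making rigorous the passage from the finite-$n$ positive-probability statement of Lemma~\ref{lemma_expgrowth} to the genuine survival of an interval in $\phi(\Lambda)$, i.e.\ controlling the limit $n\to\infty$. One must check that the events ``$Z^V(\underline{k}_l\underline{k}_p)\ge\eta^p$ for all $p\le n$, all $\underline{k}_p$'' are nested decreasing in $n$ so that their intersection retains positive probability, and then that on this intersection the nested interval actually persists in $\phi(\Lambda)=\bigcap_n\phi(\Lambda^n)$ rather than merely in each $\phi(\Lambda^n)$ — this is where one quotes the corresponding step of \cite{DS08}, which uses that the surviving triangle configuration, having uniformly positive density of triangles in every subcolumn, forces the projection to be the full interval. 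Everything else is bookkeeping identical to \cite{DK08}; the content of the theorem is the legitimacy of the DGC-for-(JSC) substitution, which is exactly what Lemmas~\ref{lemma_DG3} and \ref{lemma_expgrowth} have set up.
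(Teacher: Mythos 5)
Your proposal takes the same route the paper itself takes: the text preceding Theorem~\ref{StDD09} says only that one replaces the JSC and Lemma~\ref{DK:lemma_1} in the proof of Theorem~\ref{StDK08} (from \cite{DK08}) by the DGC and Lemma~\ref{lemma_expgrowth}, and then re-runs that proof verbatim. Your outline of part (1) (exponential growth inside a $\Delta$-pair via the new lemma, then the Hausdorff-dimension/self-similarity bootstrap to upgrade positive probability to ``a.s.\ on $\{F_1-F_2\neq\emptyset\}$'') and of part (2) (the subcritical branching/gap argument, which never used the JSC) reproduces the paper's intended decomposition.

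One claim, however, is false as stated and you should not lean on it as the bridge from finite $n$ to the limit: ``the events are decreasing in $n$, so their intersection has positive probability.'' Nested decreasing events of positive probability can have intersection of probability zero, since $\bP\bigl(\bigcap_n A_n\bigr)=\lim_n\bP(A_n)$ may vanish even when every $\bP(A_n)>0$. Indeed, the lower bound $\bP(E)\,\bP(E_n\mid E)$ produced in the proof of Lemma~\ref{lemma_expgrowth} visibly tends to $0$ as $n\to\infty$, and the event $\bigcap_n E_n$ (the growth process follows the deterministic $(\mu_k^\star,\lambda_k^\star)$ prescription forever) genuinely has probability zero. The way \cite{DS08} and \cite{DK08} actually close the gap is different: the finite-$n$ exponential growth is used only to reach a level at which the triangle counts in every subcolumn are so large that a supercritical Galton--Watson comparison (a second-moment or large-deviations bound) shows the column survives to infinity with a probability bounded away from zero, uniformly in the starting level. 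You do correctly flag this passage as the ``main obstacle'' and defer to \cite{DS08}, which is exactly what the paper does; just delete the parenthetical justification, since it is not merely imprecise but wrong.
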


This result is useful since it can be successfully applied to the
class of correlated fractal percolation, whilst the JSC is never
satisfied for the members of this class. Actually our new
condition can always supersede the JSC.

\begin{lemma}
Suppose that the joint survival distributions $\mu$ and $\lambda$
satisfy the {\rm JSC}. If $\gamma_k >1$ for all $k \in \A$, then
the pair $(\mu,\lambda)$ satisfies the {\rm DGC}.
\end{lemma}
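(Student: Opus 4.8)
The plan is to verify the distributed growth condition with a single, $k$-independent choice of sets: for every $k\in\A$ take $X_k:=\emph{Supp}_m(\mu)$ and $Y_k:=\emph{Supp}_m(\lambda)$, the marginal supports of the two joint survival distributions. With this choice, (DG0) is exactly the content of the {\rm JSC}: Condition~\ref{JSC} asserts that $\mu(\emph{Supp}_m(\mu))>0$ and $\lambda(\emph{Supp}_m(\lambda))>0$, i.e. $\mu(X_k)>0$ and $\lambda(Y_k)>0$ for all $k$.

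The heart of the matter is then a comparison between $\gamma_e(X_k,Y_k)$ and the genuine correlation coefficient $\gamma_e$. By~(\ref{gammak}) we have $\gamma_e(X_k,Y_k)=\sum_{i\in\A}\ind{Y_k}(i)\ind{X_k}(i+e)$. Since each marginal probability lies in $[0,1]$ and, by the description of the support recalled in Condition~\ref{JSC}, $p_j>0$ holds precisely when $j\in\emph{Supp}_m(\mu)=X_k$ (and similarly for the $q_i$ and $Y_k$), we have the pointwise bounds $p_j\le\ind{X_k}(j)$ and $q_i\le\ind{Y_k}(i)$ for all indices. Multiplying and summing over $i$ gives
\begin{equation*}
\gamma_e \;=\; \sum_{i\in\A} q_i\,p_{i+e} \;\le\; \sum_{i\in\A}\ind{Y_k}(i)\ind{X_k}(i+e) \;=\; \gamma_e(X_k,Y_k).
\end{equation*}
Now $\gamma_e(X_k,Y_k)$ is a nonnegative \emph{integer} (it counts the indices $i$ with $i\in Y_k$ and $i+e\in X_k$), and by hypothesis $\gamma_e(X_k,Y_k)\ge\gamma_e>1$; hence $\gamma_e(X_k,Y_k)\ge 2$ for \emph{every} $e\in\A$. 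In particular (DG1) holds (with $2$ in place of the required $1$), and (DG2) holds for $e=k$ and $e=k+1$. This establishes the {\rm DGC}.

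I do not expect any real obstacle here. The only point needing a moment's care is the elementary inequality $p_j\le\ind{\emph{Supp}_m(\mu)}(j)$, which uses that the $p_j$ are probabilities, together with the integrality of $\gamma_e(X_k,Y_k)$, which is what upgrades the strict inequality ``$>1$'' into ``$\ge 2$''; note that the hypothesis $\gamma_k>1$ for all $k$ enters the argument only through this final step.
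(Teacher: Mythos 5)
Your argument is correct and is essentially the same as the paper's: choose $X_k=\emph{Supp}_m(\mu)$ and $Y_k=\emph{Supp}_m(\lambda)$ for every $k$, observe that (DG0) is exactly the {\rm JSC}, bound $\gamma_e(X_k,Y_k)\ge\gamma_e$ by the pointwise estimate $p_j\le\ind{X_k}(j)$, $q_i\le\ind{Y_k}(i)$, and use integrality to upgrade $\gamma_e>1$ to $\gamma_e(X_k,Y_k)\ge2$, giving (DG1) and (DG2). No material differences.
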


\textbf{Proof.} We take  for the sets $X_k$ and $Y_k$ in
(\ref{gammak}) the marginal supports of $\mu$ and $\lambda$. Then
the JSC implies that (DG0) holds. Since $q_i = 0$ if $i\not\in{\rm
Supp}_m(\lambda)$, and similarly for $p_i$, we have for all
$e\in\A$
\begin{eqnarray*}
\quad\quad\gamma_e({\rm Supp}_m(\mu),{\rm Supp}_m(\lambda)) &=& \sum_{i\in \A} \ind{{\rm Supp}_m(\lambda)}(i)\ind{{\rm Supp}_m(\mu)}(i+e)\\
&\geq & \sum_{i\in\A} q_i p_{i+e} = \gamma_e \geq 2,\nonumber
\end{eqnarray*}
since the number on the left hand side is an integer larger than
1. Therefore  $X_k$ and $Y_k$ certainly satisfy (DG1) and (DG2)
for all $k\in \A$.  Thus $(\mu,\lambda)$  satisfies the DGC.\hfill
$\Box$

\section{Classifying correlated fractal percolation}\label{classify}

With the distributed growth condition at our disposal we can make
an attempt to solve the Palis problem for correlated fractal
percolation. To facilitate our search for sets satisfying the DGC,
we introduce an alternative notation for subsets of the alphabet.
A subset $S$ of the alphabet $\A$ can be represented as a string
of length $M$ with at the $i$th position a zero or a one,
indicating whether or not $i$ is contained in $S$. For
$(m,M,p)$-percolation, all subsets of $\A$ to which is assigned
positive probability correspond to a string consisting of $m$ ones
and $M\!-m$ zeros, where any order of the symbols is allowed. Next
we need the notion of the \emph{cyclic shift operator} $\sigma$.
For any string $X=x_0x_1\dots x_{M-2} x_{M-1}$ we define
\begin{equation}
\sigma (X)=x_1x_2\dots x_{M-1}x_0.
\end{equation}
For the $k$th iterate of $\sigma$ we use the notation $\sigma^k$
and for its inverse $\sigma^{-k}$. Computing $\gamma_k(X,Y)$ can
be done by writing down the two binary strings corresponding to
$\sigma^k(X)$ and $Y$, and then counting in how many positions
both strings have a one (this will be called a
\emph{coincidence}). This procedure is illustrated in
(\ref{illustratie}) for $M=9$, $k=4$ and the sets $X =
\left\{3,5,7,8\right\}$ and $Y = \left\{0,1,6,7\right\}$, where we
abuse notation by \emph{also} writing $X$ for the indicator string
of $X$, and similarly for $Y$ (this will never cause confusion).
\begin{equation}\label{illustratie}
\begin{array}{clllllllllllll}
X&:\quad &0&0&0&1&0&1&0&1&1\\[.3cm]
\sigma^4(X)&:\quad &0&\textbf{1}&0&1&1&0&0&0&1\\
Y&:\quad &1&\textbf{1}&0&0&0&0&1&1&0
\end{array}
\end{equation}

As we see, there is one coincidence, so $\gamma_4(X,Y)=1$.
Checking the DGC boils down to finding binary strings with the
right properties as given in (DG0), (DG1) and (DG2).
\\
\\
Let $X$ and $Y$ be two subsets of the $M$-adic alphabet $\A$
containing $m$ elements in order to satisfy (DG0). Our strategy is
to choose $X$ such that we get a binary string with all ones at
the beginning and $Y$ such that the ones are distributed  evenly
over the string in such a way that at most $m-1$ consecutive zeros
occur. This pattern will lead to fulfillment of requirement (DG1).
If we have sufficient freedom to choose $Y$ within this framework,
then we will also succeed in letting (DG2) be satisfied. The
details of this strategy are filled in in the proof of the lemma
below.

\begin{lemma}\label{Stperc}
For $(m,M,p)$-percolation the following two assertions
hold:
\begin{enumerate}
\item If $m < \sqrt{M}$ or $\displaystyle p<\frac{1}{\sqrt{M}}$,
then $F_1-F_2$ contains no interval a.s.\footnote{Actually, $m <
\sqrt{M}$ implies that $\displaystyle p<1/\sqrt{M}$. Hence the
statement \emph{"If $\displaystyle p<1/\sqrt{M}$, then $F_1-F_2$
contains no interval a.s."} is equivalent to the first assertion of
Lemma \ref{Stperc}. We formulated the lemma in this way to
emphasize what the bounds on $m$ are.} \item If $m \geq \sqrt{M+2}$
and $\displaystyle p>\frac{1}{\sqrt{M}}$, then $F_1-F_2$ contains an
interval a.s. on $\left\{F_1-F_2\not= \emptyset\right\}$.
\end{enumerate}
\end{lemma}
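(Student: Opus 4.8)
The plan is to split the argument into the two assertions, which correspond respectively to the ``no interval'' criterion of Theorem~\ref{StDD09}(2) (via the critical case Proposition~\ref{crit}) and the ``interval'' criterion of Theorem~\ref{StDD09}(1). First recall that for $(m,M,p)$-percolation all marginals are equal, $p_i=q_i=p=(1-\mu(\emptyset))m/M$, so the correlation coefficients simplify to $\gamma_k=\sum_i q_ip_{i+k}=Mp^2$ for \emph{every} $k\in\A$; in particular $\gamma:=\min_k\gamma_k=Mp^2$, and $\gamma>1$ iff $p>1/\sqrt M$, $\gamma<1$ iff $p<1/\sqrt M$, $\gamma=1$ iff $p=1/\sqrt M$. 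This single observation already reduces both halves of the lemma to the question of \emph{whether $(\mu,\lambda)$ satisfies the DGC}, since once the DGC holds Theorem~\ref{StDD09} gives assertion~(2) when $p<1/\sqrt M$ and assertion~(1) when $p>1/\sqrt M$; the boundary case $p=1/\sqrt M$ is handled separately by Proposition~\ref{crit} (note $p_iq_i=p^2=1/M\neq1$ since $M\ge2$, so its hypothesis is met). For the footnote's remark, observe $m<\sqrt M$ forces $p\le m/M<1/\sqrt M$, so the first assertion is subsumed by the $p<1/\sqrt M$ statement; I would state this in one line.

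For assertion~(1) with $p<1/\sqrt M$ strictly: here $\gamma_k=Mp^2<1$ for all $k$, so in particular $\gamma_0\le1$, and Proposition~\ref{crit} applies directly (no DGC needed), giving no interval a.s. For the borderline $p=1/\sqrt M$, again $\gamma_0=1$ and $p_iq_i=1/M\ne1$, so Proposition~\ref{crit} again gives no interval. Thus assertion~(1) is essentially immediate from the critical-case proposition and needs no new construction.

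The substance of the lemma is assertion~(2), for which one must exhibit, for every $k\in\A$, sets $X_k,Y_k\subseteq\A$ with $\#X_k=\#Y_k=m$ (so that $\mu(X_k)>0$, $\lambda(Y_k)>0$ as required by (DG0)), satisfying $\min_{e}\gamma_e(X_k,Y_k)\ge1$ (DG1) and $\gamma_k(X_k,Y_k)\ge2$, $\gamma_{k+1}(X_k,Y_k)\ge2$ (DG2). Following the strategy flagged before the lemma, I would first treat $k=0$: take $X=\{0,1,\dots,m-1\}$ (a block of $m$ consecutive ones at the start of the string) and choose $Y$ to be an $m$-element set whose ones are spread out so that between consecutive ones (cyclically) there are at most $m-1$ zeros --- this is possible precisely because the largest cyclic gap can be made $\le\lceil M/m\rceil-1\le m-1$ exactly when $m\ge\sqrt{M+?}$, which is where the hypothesis $m\ge\sqrt{M+2}$ enters. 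With $X$ a solid block, $\gamma_e(\sigma^e(X),Y)$ counts how many of the $m$ ones of $Y$ fall in a window of $m$ consecutive positions; since $Y$'s gaps are all $\le m-1$, every length-$m$ window contains at least one ``one'' of $Y$, giving (DG1). For (DG2) one needs $\gamma_0(X,Y)\ge2$ and $\gamma_1(X,Y)\ge2$, i.e.\ the window $\{0,\dots,m-1\}$ and its shift $\{1,\dots,m\}$ each catch at least two ones of $Y$; with the extra room guaranteed by $m\ge\sqrt{M+2}$ (which makes $m^2\ge M+2>M$, so $m$ blocks of length $m$ more than cover $\A$ with slack) one can position $Y$ to force two ones into two designated overlapping windows. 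For general $k$, apply the cyclic shift: use $X_k:=\sigma^{-k}(X_0)$ — hmm, more cleanly, since $\gamma_e(\sigma^{-c}(X),Y)=\gamma_{e-c}(X,Y)$, I would instead define $X_k,Y_k$ by cyclically rotating the $k=0$ solution so that the guaranteed ``$\ge2$'' windows land at positions $k$ and $k+1$; the spread-out property of $Y$ is shift-invariant so (DG1) persists. The main obstacle I anticipate is the careful combinatorial bookkeeping in placing the ones of $Y$: one must simultaneously keep \emph{all} cyclic gaps $\le m-1$ (for DG1) and arrange two hits in two adjacent length-$m$ windows (for DG2), and verifying that $m\ge\sqrt{M+2}$ is exactly the inequality that permits both — e.g.\ by an explicit choice such as placing the ones of $Y$ at positions $\approx jM/m$ for $j=0,\dots,m-1$ with small perturbations near $k$ — and checking the edge cases where $M$ is not divisible by $m$. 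Everything else (the reduction via $\gamma_k\equiv Mp^2$, and invoking Theorem~\ref{StDD09} and Proposition~\ref{crit}) is routine.
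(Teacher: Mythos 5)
Your overall strategy matches the paper's: take $X$ to be a solid block of $m$ ones ($X=\{0,\dots,m-1\}$), take $Y$ with its $m$ ones spread cyclically so that every gap is at most $m-1$, check (DG0)--(DG2) for $k=0$, and handle general $k$ via $\gamma_k(X,\sigma^kY)=\gamma_0(X,Y)$. For assertion (1) you route through Proposition~\ref{crit} where the paper invokes Theorem~\ref{StDD09}(2); both give the conclusion, and your choice is arguably the cleaner one, since summing (DG1) and (DG2) over $e\in\A$ gives $m^2=\sum_{e\in\A}\gamma_e(X,Y)\ge M+2$, so for $(m,M,p)$-percolation the DGC (with the $m$-element sets that (DG0) forces) can only ever hold when $m\ge\sqrt{M+2}$.

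The genuine gap is in verifying (DG2) for assertion (2), which is precisely where the $+2$ in $\sqrt{M+2}$ earns its keep and which your sketch explicitly defers. Your claim that one can ``position $Y$ to force two ones into two designated overlapping windows'' is not automatic given the constraints $|Y|=m$ and all cyclic gaps $\le m-1$: for instance, with $M=7$, $m=3$ the evenly-spread $Y=\{0,2,4\}$ has all gaps $\le 2$ and $\gamma_0(X,Y)=2$, yet $\gamma_1(X,Y)=1$, so (DG2) fails for that $Y$. The paper resolves this by building a skeleton $Y'=R\,[1\,0^{m-1}]^q$ with $q=\lfloor M/m\rfloor$ and $R$ a prefix of $1\,0^{m-2}$, and then splitting into cases on $q$ and $r=M-mq$. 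If $q\le m-2$ or $R=\emptyset$, then $Y'$ has at most $m-1$ ones, so one can insert a one at position $1$ (forcing $\gamma_0,\gamma_1\ge2$, since positions $0,1$ lie in both windows $\{0,\dots,m-1\}$ and $\{M-1,0,\dots,m-2\}$ as soon as $m\ge3$) and then pad with further ones to reach cardinality $m$, which is how (DG0) is secured; your plan, which fixes $|Y|=m$ from the outset, has no analogue of this padding step. If $q=m-1$ and $R\ne\emptyset$, then $Y'$ already has exactly $m$ ones and nothing can be added, and here one must actually compute: $m\ge\sqrt{M+2}$ together with $M=m(m-1)+r$ gives $r\le m-2$, so the first two ones of $Y'$ sit at positions $0$ and $r\le m-2$, both inside the two required windows. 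Note also that (DG1) with an evenly-spread $Y$ only requires $m\ge\sqrt M$, not $m\ge\sqrt{M+2}$; the extra slack is spent entirely on (DG2), and pinning down exactly where it is spent is not a bookkeeping afterthought but the content of the proof.
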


\textbf{Proof.} Suppose that $p< \frac{1}{\sqrt{M}}$, then for all
$k\in\A$ we have
\begin{equation*}
\gamma_k = Mp^2 < M\left(\frac{1}{\sqrt{M}}\right)^2 = 1,
\end{equation*}
and consequently $F_1-F_2$ contains no interval a.s. by Theorem
\ref{StDD09}. If $m<\sqrt{M}$, then $p =
(1-\mu(\emptyset))\frac{m}{M} < \frac{1}{\sqrt{M}}$ and
consequently the same argument is applicable, completing the proof
of the first part of Lemma \ref{Stperc}.
\\
\\
For the proof of the second assertion, assume that $m \geq
\sqrt{M+2}$ and define $X,Y'\subseteq\A$ by their strings
\begin{eqnarray*}\label{defsets}
X&=&1^{m}\,0^{M\!-m}\nonumber\\
Y'\!&=&R\,[1\,0^{m-1}]^q,
\end{eqnarray*}
where $q=\lfloor M/m \rfloor$,$R$ is a left substring of
$1\,0^{m-2}$ ($R$ is empty when $m$ divides $M$), and
$[1\,0^{m-1}]^q$ denotes the string $1\,0^{m-1}$, $q$ times
repeated. Ignoring the trivial case $M=m=2$ we obtain from $m \geq
\sqrt{M+2}$  that we may assume $m\geq 3$.
\\
\\
Since $Y'$ does not contain $m$ consecutive zeros (also
cyclically), whereas $X$ begins with $m$ consecutive 1's, we must
have
\begin{equation*}\label{RG2a}
\gamma_e(X,Y') \geq 1 \quad\quad\mbox{for\ } e = 0,1,\ldots,M-1.
\end{equation*}

So $X$ and $Y'$ satisfy (DG1). The set $X$ contains $m$ elements,
which means that $\mu(X)>0$.
\\
Note that $q=\lfloor M/m \rfloor$ can not exceed $m-1$, since that
would imply $m\leq \sqrt{M}$.
\\
\\
{\it Case 1: $q\leq m-2$ or  $R$ is empty.} \\
Then $Y'$ contains at most $m-1$ ones. In order to obtain (DG2),
we construct $Y''$ from $Y'$ by putting a one in the second
position (if there is a zero)---note that $X$ and $Y''$ will then
certainly still satisfy (DG1). Moreover, we now have
\begin{equation*}
\gamma_0(X,Y'')\geq 2,\quad\quad \gamma_1(X,Y'')\geq 2,
\end{equation*}
since $m\geq 3$. Finally $Y$ is obtained by adding 1's to $Y''$
(if necessary) till $Y$ contains $m$ ones---and thus $\mu(Y)>0$.
As an illustration for $M=7$ and $m=4$, $X$ is given by $1111000$
and (writing $\gamma_k(\cdot)$ for $\gamma_k(X,\cdot)$):
\begin{equation*}\label{illustratie2}
\begin{array}{l|c|c|c|c}
\;\cdot &\mbox{String}& \mu(\cdot)>0& \gamma_e(\cdot)\geq 1\quad \forall e\in\A &\gamma_0(\cdot),\gamma_1(\cdot)\geq 2 \\
\hline\\[-.42cm]
Y':\quad &1001000&\mbox{no}&\mbox{yes}&\mbox{no}\\
Y'':\quad &1101000&\mbox{no}&\mbox{yes}&\mbox{yes}\\
Y:\quad &1111000&\mbox{yes}&\mbox{yes}&\mbox{yes}\\
\end{array}
\end{equation*}

Now we have found $X_0:=X$ and $Y_0:=Y$ satisfying (DG0), (DG1)
and (DG2) for $k=0$. By observing that
\begin{equation}\label{eq:symm}
\gamma_k(X,\sigma^{k}Y) = \gamma_0(X,Y);\quad \gamma_{k+1}(X,\sigma^{k}Y) = \gamma_1(X,Y),
\end{equation}
it follows that the DGC holds for any $k\in \A$ if we take $X_k = X$ en $Y_k=\sigma^{k}Y$.
\\
\\
{\it Case 2: $q = m-1$ and $R\ne\emptyset$.} \\  Since $m\ge \sqrt{M+2}$, we have
(with $r$ the length of $R$)
$$M=m^2-m+r\ge M+2-m+r,$$
so $r\le m-2$, implying that $R$ does not contain more than $m-3$
zero's. This gives that $\gamma_0(X,Y)\geq 2$ and
$\gamma_1(X,Y)\geq 2$. Now again we can take $X_k = X$ en
$Y_k=\sigma^{k}Y$. Summarizing, for all cases of correlated
fractal percolation in part (2) we have shown that (DG0), (DG1)
and (DG2) hold. We conclude that the DGC is satisfied.

Moreover, for all $k\in\A$ we find
\begin{equation*}
\gamma_k = \sum_{j=0}^{M-1} p_j p_{j+k} = Mp^2> M\left(\frac{1}{\sqrt{M}}\right)^2 = 1,
\end{equation*}
and therefore, by Theorem \ref{StDD09}, $F_1-F_2$ contains an
interval a.s. on $\left\{F_1-F_2\not= \emptyset\right\}$.\hfill $\Box$
\\
\\
Lemma \ref{Stperc} still gives no conclusive answer for some
combinations of $m$ and $M$ when $p>1/\sqrt{M}$, namely, those
where $m=\sqrt{M+1}$. By having a look at the $2^{\rm nd}$ order
sets for $(m,M,p)$-percolation this can be resolved.

\begin{lemma}\label{Stperc2}
Consider $(m,M,p)$-percolation. If  $\displaystyle
p>\frac{1}{\sqrt{M}}$ and \begin{equation}\label{sqrtM} m =
\sqrt{M+1},
\end{equation}
 then $F_1-F_2$ contains an
interval a.s. on $\left\{F_1-F_2\not= \emptyset\right\}$.
\end{lemma}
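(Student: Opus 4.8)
The plan is to exploit the higher-order Cantor set machinery from Subsection~\ref{sub:high}: replace $(m,M,p)$-percolation by its second-order version, which is an instance of $(m^{(2)},M^{(2)},p^{(2)})$-percolation with $M^{(2)}=M^2$ and $m^{(2)}=m^2=(M+1)$, and then apply Lemma~\ref{Stperc}(2) to the pair $(\mu^{(2)},\lambda^{(2)})$. The point is that at second order we are off the bad diagonal $m=\sqrt{M+1}$: we have $m^{(2)}=M+1$ while $\sqrt{M^{(2)}+2}=\sqrt{M^2+2}$, so we need to check $m^{(2)}=M+1\ge\sqrt{M^2+2}$, which fails for every $M\ge 2$ (since $(M+1)^2=M^2+2M+1>M^2+2$ only when $M\ge 1$ — wait, that inequality holds). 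Let me restate: $(M+1)^2 = M^2+2M+1 \ge M^2+2$ whenever $M\ge 1$, so indeed $m^{(2)}\ge\sqrt{M^{(2)}+2}$, and the hypothesis $m^2\ge\sqrt{M^2+2}$ needed for part~(2) of Lemma~\ref{Stperc} is satisfied. So the strategy is clean in principle.

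The steps, in order, are as follows. First I would write down explicitly what $\mu^{(2)}$ is for $(m,M,p)$-percolation: a set $A\subseteq\A^{(2)}$ gets positive probability precisely when its "digit expansion in base $M$" describes a two-step survival pattern consistent with $\mu$, i.e. $A$ is obtained by first choosing a nonempty level-one survival set $S\subseteq\A$ with $|S|=m$, and then, for each $i\in S$, choosing an independent level-two survival set of size $m$ inside the block $\{iM,\dots,iM+M-1\}$. I would check that this makes $\mu^{(2)}$ assign the same positive probability to every admissible set, but — and this is the crucial wrinkle — the admissible sets are \emph{not} all $m^2$-element subsets of $\A^{(2)}$; they are only those whose support, viewed as $M$ blocks of length $M$, occupies exactly $m$ blocks with exactly $m$ ones in each occupied block. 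So the second-order process is genuinely more constrained than plain $(m^2,M^2,p^2)$-percolation, and I cannot just quote Lemma~\ref{Stperc} verbatim; I must instead redo the construction in its proof, checking that the witness sets $X,Y$ can be chosen \emph{within} the smaller admissible family. Concretely, for $X$ I would try $X=1^{m^2}0^{M^2-m^2}$ — but this is admissible for $\mu^{(2)}$ only if $m^2$ is a multiple of $M$ packed as $m$ full blocks, which it is \emph{not} in general (e.g. $m^2=M+1$ is one full block of $M$ ones plus one stray one, needing $m$ occupied blocks). So the honest version is: choose the level-one survival set to be a block of $m$ consecutive coordinates $\{0,1,\dots,m-1\}\subseteq\A$, and inside each choose the level-two survival pattern $1^m 0^{M-m}$ for $X$ and a suitably spread-out pattern for $Y$; then reassemble.

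The main obstacle I expect is exactly this feasibility check: verifying that the "all ones at the front for $X$, evenly spread ones for $Y$" strategy of Lemma~\ref{Stperc}'s proof can be realized by \emph{block-structured} strings of the form (occupied blocks)$\times$(within-block patterns), and that the resulting $Y$-string still avoids $m^2$ cyclically-consecutive zeros (for (DG1)) and produces at least two coincidences in shifts $0$ and $1$ (for (DG2)). The block structure forces long runs of zeros between occupied blocks — up to $(M-m)\cdot M + (\text{gaps})$ zeros — so to get (DG1) I will need the occupied blocks of $X$ and of $Y$ to be arranged so that every window of $m^2$ consecutive $\A^{(2)}$-coordinates meets an occupied-and-"one" position of $X$ shifted against a "one" of $Y$; the natural fix is to take the $m$ occupied blocks of $Y$ equally spaced (at block-positions $0,q',2q',\dots$ with $q'=\lfloor M/m\rfloor$) mimicking the one-level construction at the coarse scale, and simultaneously spread the within-block ones. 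Once (DG0)–(DG2) are checked for $(\mu^{(2)},\lambda^{(2)})$, Theorem~\ref{StDD09}(1) together with the computation $\gamma_k = Mp^2 > 1$ (valid at every order, since $p>1/\sqrt M$ gives $p^{(2)}=p^2>1/\sqrt{M^2}$) yields that $F_1-F_2$ contains an interval a.s.\ on $\{F_1-F_2\ne\emptyset\}$, which is the claim. I would also remark that the case $M=2,m=\sqrt3$ does not arise since $m$ must be an integer, and small cases ($M=3,m=2$; $M=8,m=3$; $M=15,m=4$; etc.) can be exhibited by hand to build confidence in the general block construction.
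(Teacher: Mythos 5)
You correctly identify the strategy the paper uses (pass to second-order sets, observe the block structure of admissible $\mu^{(2)}$-sets, build block-structured witnesses and verify (DG0)--(DG2) directly), and you correctly see the main wrinkle: the second-order process is not ordinary $(m^2,M^2,p^2)$-percolation, so Lemma~\ref{Stperc}(2) cannot be quoted. But your proposal stops precisely where the paper's proof begins to do work. You describe the task --- ``verify that block-structured strings satisfying (DG1)--(DG2) exist'' --- and explicitly say ``the main obstacle I expect is exactly this feasibility check,'' without resolving it. That check is the entire content of the lemma; everything before it is bookkeeping. A roadmap with the destination left unreached is not yet a proof.

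Moreover, the witness you tentatively propose is different from the one that actually works, and the difference matters. You suggest $X^{(2)}=[1^m0^{M-m}]^m0^{(M-m)M}$ (concentrated coarse \emph{and} concentrated fine) paired against a $Y^{(2)}$ that is spread-out at both scales. The paper instead \emph{swaps} the roles at the fine scale: it takes $X^{(2)}$ with coarse pattern $X=1^m0^{M-m}$ but each occupied block filled with the spread pattern $Y=[1\,0^{m-2}][1\,0^{m-1}]^{m-1}$, while $Y^{(2)}_{0k_2}$ has spread coarse structure with each occupied block filled with the concentrated shifted pattern $\sigma^{k_2}(X)$, plus an extra shift $\sigma^{Ms}$. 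This swap is what makes the reduction $\gamma^{(2)}_{\underline{e}_2}(X^{(2)},Y^{(2)}_{0k_2})\ge\gamma_{e_2}(Y,\sigma^{k_2}(X))\ge 1$ go through, and it is also what is exploited in the (DG2) verification via the case-split on $s$ and the two ``bold'' coincidences in the $YY$ versus $\sigma^{k_2}(X)\sigma^{k_2}(X)$ comparison. Your construction would at minimum require a different argument for both (DG1) and (DG2), would still need the extra $k_2$-dependent shift (which you never mention) to handle the fine coordinate $k_2$ of the shift $\underline{k}_2=k_1k_2$ in (DG2), and would still need the reduction $Y^{(2)}_{k_1k_2}=\sigma^{k_1M}(Y^{(2)}_{0k_2})$ to dispatch the coarse coordinate $k_1$. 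None of this is present. In short: right strategy, correctly diagnosed obstacle, but the construction and verification --- the mathematically hard part --- is missing.
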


\textbf{Proof.} First we have a look at the shape of the binary
strings corresponding to $2^{\rm nd}$ order sets to which is
assigned positive probability by the $2^{\rm nd}$ order joint
survival distribution $\mu^{(2)}$ of correlated fractal
$(m,M,p)$-percolation. Such a string has length $M^2$. It should
be regarded as consisting of $M$ blocks of length $M$. Each of
these blocks contains either exclusively zeros, or it contains
$M-m$ zeros and $m$ ones. Blocks of the latter kind occur exactly
$m$ times. Positions in the binary string can be identified with
numbers in $\A^{(2)}$: an $M^2$-adic number represented by
$\underline{k}_2 = k_1k_2$ corresponds to the $(k_2+1)$th position
in the $(k_1+1)$th block.
\\
\\
Note that (\ref{sqrtM}) implies that $M-m(m-1)=m-1$ and $\lfloor
M/m \rfloor=m-1$. This means that the two strings $X$ and $Y'$
defined in the proof of Lemma \ref{Stperc} are now equal to (we
omit from now on the prime on $Y$)
\begin{eqnarray*}\label{defsets2}
X&=&1^{m}\,0^{M\!-m},\nonumber\\
Y&=&[1\,0^{m-2}]\,[1\,0^{m-1}]^{m-1}.
\end{eqnarray*}

The basic idea of the proof is to replace the $0$'s in  these two
strings by blocks $0^M$, and the $1$'s by blocks similar to $X$ or
$Y$ to obtain for all $\underline{k}_2\in\A^{(2)}$ the order 2
strings $X_{\underline{k}_2}^{(2)}$ and
$Y_{\underline{k}_2}^{(2)}$ which will satisfy (DG1) and
(DG2)---note that by construction (DG0) is then obviously
satisfied.

Actually we will replace all the $m$ $1$'s in $X$ by the string
$Y$. Replacing additionally the $M\!-m$ $0$'s by blocks $0^M$ we
obtain $X_{\underline{k}_2}^{(2)}$ independent of
$\underline{k}_2$, and hence we will denote it by $X^{(2)}$.
\\
\\
The definition of $Y_{\underline{k}_2}^{(2)}$ is slightly more
involved. We first restrict ourselves to the case $k_1=0$ and
define:
\begin{equation*}
Y_{0k_2}^{(2)} :=
\sigma^{Ms}\left([\sigma^{k_2}(X)\,0^{(m-2)M}]\,[\sigma^{k_2}(X)\,0^{(m-1)M}]^{m-1}\right),
\end{equation*}
where $s$ is given by
\begin{equation*}
s:=\left\{\begin{array}{ll}
0 &\quad\quad \mbox{if\ } 0\leq k_2 \leq m-2,\\
1 &\quad\quad \mbox{if\ } m-1\leq k_2 \leq M-1.
\end{array}\right.
\end{equation*}
So the $m$ $1$'s in $Y$ are replaced by shifted versions of $X$
and $0$'s by blocks $0^M$ and finally an additional shift over $M$
positions is applied on the complete string if $k_2$ is at least
$m-1$.

\begin{example}\label{M8m3}
Let $M=8$ and $m=3$. Then
$$X=11100000 \quad {\rm and} \quad Y=10100100.$$
Writing $O=0^8$ and $s= \mathbf{1}_{\left\{n:n\geq
2\right\}}(k_2)$, we have for $0k_2\in \A^{(2)}$
\begin{equation*}
\begin{array}{rc@{\hspace{6mm}} c\SHa c\SHa c\SHa c\SHa c\SHa c\SHa c\SHa c\SHa c}
X^{(2)}                     &=& &\lp Y\rp        &\lp Y\rp &\lp Y\rp        & \lp O\rp & \lp O\rp & \lp O\rp        & \lp O\rp & \lp O\rp \\
Y^{(2)}_{0k_2}   &=& \sigma^{8s}\bigl(&\sigma^{k_2}(X) &\lp O\rp &\sigma^{k_2}(X) & \lp O\rp & \lp O\rp & \sigma^{k_2}(X) & \lp O\rp & \lp O \bigr).\phantom{.}
\end{array}
\end{equation*}
\end{example}

Suppose that $X^{(2)}$ and $Y^{(2)}_{0k_2}$ satisfy the DGC. Then
it is easy to construct sets $X^{(2)}$ and $Y^{(2)}_{k_1k_2}$
satisfying requirements (DG1) and (DG2) for other values of $k_1$.
First observe that all shifted versions of $X^{(2)}$ and
$Y^{(2)}_{0k_2}$ still satisfy (DG1). Furthermore we use the fact
that
\begin{eqnarray*}
\gamma^{(2)}_{\underline{k}_2}\big(X^{(2)},\sigma^{k_1M}(Y^{(2)}_{0k_2})\big)&=&
\gamma^{(2)}_{0k_2}(X^{(2)},Y^{(2)}_{0k_2})\geq 2,\\
\gamma^{(2)}_{\underline{k}_2+1}\big(X^{(2)},\sigma^{k_1M}(Y^{(2)}_{0k_2})\big)&=&
\gamma^{(2)}_{(0k_2)+1}(X^{(2)},Y^{(2)}_{0k_2})\geq 2.
\end{eqnarray*}
Now it follows that we can choose $Y^{(2)}_{\underline{k}_2} = \sigma^{k_1M}(Y^{(2)}_{0k_2})$.
\\
\\
To complete the proof, it suffices to check that the sets
$X^{(2)}$ and $Y^{(2)}_{0k_2}$ satisfy requirements (DG1) and
(DG2) of the DGC. Therefore, we consider
the correlation coefficients
$\gamma^{(2)}_{\underline{e}_2}(X^{(2)},Y^{(2)}_{0k_2})$ where
$\underline{e}_2=e_1e_2\in\A^{(2)}$. We will focus first on the
`coarse' structure, i.e. on those correlation coefficients for
which $e_2=0$. Here we will always have a string $\sigma^{k_2}(X)$
in $Y^{(2)}_{0k_2}$ coinciding with a string $Y$ in $X^{(2)}$ for
the same reason that we always have a coincidence at level $1$.
This implies that we also always have a string $\sigma^{k_2}(X)$
in $Y^{(2)}_{0k_2}$ coinciding with a zero string of length $M$ in $X^{(2)}$
which is followed (cyclically) by a string $Y$.

It follows that if we will shift on the `fine' level by varying
$e_2$, then in all cases we are in the same situation of one
$\sigma^{k_2}(X)$ block `entering' an $Y$ block, and one
$\sigma^{k_2}(X)$ `leaving' an $Y$ block. Thus we get the same
coincidences as in the case where $\sigma^{k_2}(X)$ and $Y$ are
compared cyclically, and therefore the second order correlation
coefficients can be related to the first order correlation
coefficients $\gamma_e(\sigma^{k_2}(X),Y)$:
\begin{equation}\label{relation}
\gamma^{(2)}_{\underline{e}_2}(X^{(2)},Y^{(2)}_{0k_2}) \geq
\gamma_{e_2}(Y,\sigma^{k_2}(X)) \geq 1
\end{equation}
for all $\underline{e}_2=e_1e_2\in\A^{(2)}$. As we see, (DG1) holds for all $\underline{e}_2\in\A^{(2)}$.
\\
\\
Now we turn to (DG2). If $e_2=k_2$, then in (\ref{relation}) we
even have by equation (\ref{eq:symm}) that
\begin{equation*}
\gamma^{(2)}_{\underline{e}_2}(X^{(2)},Y^{(2)}_{0k_2}) \geq
\gamma_{e_2}(Y,\sigma^{k_2}(X))  = \gamma_0(Y,X) = 2,
\end{equation*}
which means that
\begin{equation}\label{RG2firstpart}
\gamma^{(2)}_{0k_2}(X^{(2)},Y^{(2)}_{0k_2}) \geq 2.
\end{equation}
We still have to check that also
$\gamma^{(2)}_{(0k_2)+1}(X^{(2)},Y^{(2)}_{0k_2}) \geq 2$. First we
concentrate on the case where both the first and the last
$Y$-block in $X^{(2)}$ coincide with a $\sigma^{k_2}(X)$ block in
$Y^{(2)}_{0k_2}$. To illustrate this in the terms of Example \ref{M8m3}, we
have:
\begin{equation*}
\begin{array}{rc@{\hspace{6mm}} c\SHa c\SHa c\SHa c\SHa c\SHa c\SHa c\SHa c}
X^{(2)}                     &=& \lp Y\rp        &\lp Y\rp &\lp Y\rp        & \lp O\rp & \lp O\rp & \lp O\rp        & \lp O\rp & \lp O\rp \\
Y^{(2)}_{0k_2}   &=& \sigma^{k_2}(X) &\lp O\rp &\sigma^{k_2}(X) &
\lp O\rp & \lp O\rp & \sigma^{k_2}(X) & \lp O\rp & \lp
O.\phantom{..}
\end{array}
\end{equation*}
Keeping $k_1$ fixed to zero and varying $k_2$, the structure of
coincidences we obtain will look like:

\begin{tiny}
\begin{equation*}\label{illustratie4}
\begin{array}{l\SH l\SH l\SH l\SH l\SH l\SH l\SH l\SH l\SH l\SH l\SH l\SH l\SH l\SH l\SH l\SH l\SH l\SH l\SH l\SH l\SH l\SH l@{\hspace{5mm}} l@{\hspace{4mm}} l}
\multicolumn{5}{l}{YY:}&|1&0 &\textbf{1} & 0 & 0 & 1 & 0 & 0|& 1 & 0 & \textbf{1} & 0 & 0 & 1 & 0 & 0|&&&k_2\!\downarrow&s\!\downarrow\\
\hline\\[-.2cm]
\multicolumn{6}{l}{\sigma^{k_2}(X)\sigma^{k_2}(X):}&|1 & 1 & 1& 0& 0& 0& 0& 0|& 1 & 1 & 1 & 0 & 0 & 0 & 0 & 0|&    &0  &0 \\
                               &   &   &   &   &   &   &|1 & 1& 0& 0& 0& 0& 0 & 1|& 1 & 1 & 0 & 0 & 0 & 0 & 0 & 0| &1  &0 \\
                             |0& 0& 0& 0& 0& 0& 1& 1&|1 & 0& 0& 0& 0& 0 & 1 & 1|&    &    &    &    &    &    &    &2  &1 \\
                               &|0 & 0& 0& 0& 0& 1& 1& 1&\!|0 & 0& 0& 0& 0 & 1 & 1 & 1|&  &    &    &    &    &    &3  &1 \\
                               &   &|0 & 0& 0& 0& 1& 1& 1& 0&|0 & 0& 0& 0 & 1 & 1 & 1 & 0|&    &    &    &    &    &4  &1 \\
                               &   &   &|0 & 0& 0& 1& 1& 1& 0& 0&|0 & 0& 0 & 1 & 1 & 1 & 0 & 0|&    &    &    &    &5  &1 \\
                               &   &   &   &|0 & 0& 1& 1& 1& 0& 0& 0&|0 & 0 & 1 & 1 & 1 & 0 & 0 & 0|&    &    &    &6  &1 \\
                               &   &   &   &   &|0 & 1& 1& 1& 0& 0& 0& 0&|0  & 1 & 1 & 1 & 0 & 0 & 0 & 0|&    &    &7  &1 \\
\end{array}
\end{equation*}
\end{tiny}

Each line in the table corresponds to a value of $k_2$ and
displays the string $\sigma^{k_2}(X)\sigma^{k_2}(X)$. This string
is moved over $k_2+1$ positions to the right, since we are
interested in $\gamma^{(2)}_{(0k_2)+1}(X^{(2)},Y^{(2)}_{0k_2})$.
Then, for each value of $k_2$ the corresponding value of $s$
(being either $0$ or $1$) is computed. If $s=1$, then the string
is moved over $M=8$ positions back to the left. By construction,
the number of coincidences of $YY$ with the $k_2$-line in the
table is a lower bound for
$\gamma^{(2)}_{(0k_2)+1}(X^{(2)},Y^{(2)}_{0k_2})$. In each of the
lines of the table, we have coincidences with both bold ones in
$YY$. Therefore,
$$\gamma^{(2)}_{(0k_2)+1}(X^{(2)},Y^{(2)}_{0k_2})\geq 2.$$

Combining this with (\ref{RG2firstpart}), we see that (DG2) holds. Adapting this argument for other values of $M$ and $m$ is
straightforward.
\\
\\
As we have seen in the proof of the previous lemma, it is possible to find sufficient independent left and right triangles. Therefore, we have completed our proof that the distributed growth condition is satisfied. We also already saw $p>1/\sqrt{M}$ implies that $\gamma>1$, and hence we can use Theorem \ref{StDD09} to finish the proof of Lemma \ref{Stperc2}.\hfill $\Box$

\begin{theorem}\label{corrfrac-class}
For correlated fractal $(m,M,p)$-percolation we have
\begin{enumerate}
\item If $\gamma>1$ then $F_1-F_2$
contains an interval a.s. on $\left\{F_1-F_2 \not= \emptyset
\right\}$. \item If $\gamma\le 1$, then
$F_1-F_2$ contains no interval a.s.
\end{enumerate}
\end{theorem}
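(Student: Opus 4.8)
The plan is to reduce the theorem to a single inequality on $p$, and then quote the results already proved. For $(m,M,p)$-percolation with two independent copies $F_1,F_2$, both sets are built from the \emph{same} joint survival distribution, so all marginal probabilities equal $p$, and for every $k\in\A$
\begin{equation*}
\gamma_k=\sum_{j=0}^{M-1}p_jp_{j+k}=Mp^2.
\end{equation*}
Hence $\gamma=\min_{k\in\A}\gamma_k=Mp^2$, so that $\gamma>1\iff p>1/\sqrt M$ and $\gamma\le1\iff p\le1/\sqrt M$. It therefore suffices to prove: (1) if $p>1/\sqrt M$ then $F_1-F_2$ contains an interval a.s.\ on $\{F_1-F_2\ne\emptyset\}$, and (2) if $p\le1/\sqrt M$ then $F_1-F_2$ contains no interval a.s.

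For part (2) I would apply Proposition~\ref{crit} with $\mu=\lambda$ the $(m,M,p)$ joint survival distribution, so that $p_i=q_i=p$ for every $i\in\A$. The hypothesis $\gamma_0=Mp^2\le1$ holds by assumption, and the side condition $p_iq_i\ne1$ is automatic: from $Mp^2\le1$ we get $p^2\le1/M<1$ (in the non-degenerate situation $M\ge2$). Proposition~\ref{crit} then gives that $F_1-F_2$ contains no interval almost surely.

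For part (1) assume $p>1/\sqrt M$. Since $p=(1-\mu(\emptyset))\,m/M\le m/M$, this forces $m/M>1/\sqrt M$, i.e.\ $m>\sqrt M$, and as $m,M$ are integers, $m^2\ge M+1$. Now split into two exhaustive cases. If $m^2=M+1$, that is $m=\sqrt{M+1}$, then Lemma~\ref{Stperc2} applies (using also $p>1/\sqrt M$) and yields an interval a.s.\ on $\{F_1-F_2\ne\emptyset\}$. If $m^2\ge M+2$, that is $m\ge\sqrt{M+2}$, then the second assertion of Lemma~\ref{Stperc} applies (again using $p>1/\sqrt M$) and yields an interval a.s.\ on $\{F_1-F_2\ne\emptyset\}$. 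These two cases cover all possibilities with $p>1/\sqrt M$, which completes the proof.

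The substantive work has of course already been done in Proposition~\ref{crit} and in Lemmas~\ref{Stperc} and~\ref{Stperc2}; the only point requiring care is that the dichotomy $m^2=M+1$ versus $m^2\ge M+2$ really is exhaustive once $p>1/\sqrt M$ forces $m^2\ge M+1$ — the borderline value $m=\sqrt{M+1}$ being precisely the case left open by Lemma~\ref{Stperc} and settled by Lemma~\ref{Stperc2} — and that the side condition of Proposition~\ref{crit} causes no trouble after the trivial $M=1$ situation is set aside.
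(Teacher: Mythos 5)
Your proof is correct and follows essentially the same route as the paper, whose own proof is simply the one-line remark that the theorem is ``the combination of Lemma~\ref{Stperc}, Lemma~\ref{Stperc2} and Proposition~\ref{crit}''; you have spelled out exactly why those three results cover the dichotomy. One small streamlining worth noting: by using Proposition~\ref{crit} for the whole of part~(2) (since $\gamma_0=Mp^2\le1$ and $p^2\le1/M<1$ for $M\ge2$ take care of the side condition), you make Lemma~\ref{Stperc}(1) redundant, whereas the paper invokes both; the content is the same.
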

\textbf{Proof.}  This result is the combination of Lemma
\ref{Stperc}, Lemma \ref{Stperc2} and Proposition
\ref{crit}.\hfill $\Box$

We remark here that since these results will also hold if we
merely require that all sets with $m$ elements have positive
probability to occur, the theorem will also be true in this more
general case.

\section{The lower spectral radius in the symmetric case}

In this section we show that the distributed growth condition
propagates to higher order Cantor sets. As a consequence, the
spectral radius characterization obtained in \cite{DK08} can be
extended to joint survival distributions satisfying the DGC.

\begin{lemma}\label{lemma_propagation}
(Propagation of the distributed growth condition to higher orders)
Suppose the pair of joint survival distributions $(\mu,\lambda)$
satisfies the {\rm{DGC}}. Then for all $n\geq 1$, the pair of
$n$th order joint survival distributions
$(\mu^{(n)},\lambda^{(n)})$ satisfies the {\rm{DGC}}.
\end{lemma}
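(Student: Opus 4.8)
The plan is to reduce the $n$th-order statement to the first-order one by observing that the DGC for $(\mu^{(n)},\lambda^{(n)})$ asks, for each $\underline{k}_n\in\A^{(n)}$, for sets $X_{\underline k_n},Y_{\underline k_n}\subseteq\A^{(n)}$ satisfying (DG0), (DG1), (DG2), and that such sets can be manufactured from the first-order witnesses $X_k,Y_k$ supplied by the DGC for $(\mu,\lambda)$. Write $\underline k_n=k_1\dots k_n$. First I would fix the ``coarsest'' letter, say $k_1$, and build $Y_{\underline k_n}^{(n)}$ by substitution: take the first-order witness pair $(X_{k_1},Y_{k_1})$ for the letter $k_1$, and into each of the $m$ (or more generally each of the surviving) positions of $X_{k_1}$ insert a block which is an appropriately shifted copy of a recursively-constructed $(n-1)$th-order set for the suffix $k_2\dots k_n$, while into each non-surviving position of $X_{k_1}$ insert the all-zero block $0^{M^{n-1}}$; dually build $X^{(n)}$ by substituting into the surviving positions of the witness a fixed $(n-1)$th-order ``all ones at the front'' pattern. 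This is exactly the construction already carried out by hand in the proof of Lemma~\ref{Stperc2} for the special case $n=2$, $(m,M,p)$-percolation; here it must be done for general $n$ and general DGC-admissible witnesses, so the cleanest route is induction on $n$, with the base case $n=1$ being the hypothesis.

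The key steps, in order, are: (i) set up the block-substitution operation that turns a pair of first-order strings over $\A$ plus a pair of $(n-1)$th-order strings over $\A^{(n-1)}$ into a pair of $n$th-order strings over $\A^{(n)}$, and check that the resulting sets have the right cardinalities so that $\mu^{(n)}$, $\lambda^{(n)}$ assign them positive probability --- this gives (DG0) for free, as noted in the Lemma~\ref{Stperc2} proof; (ii) prove the ``propagation of coincidences'' identity: for any shift $\underline e_n=e_1\dots e_n\in\A^{(n)}$, because the first-order witness $Y_{k_1}$ has no $m$ (resp. no $M\!-m+1$, i.e. no maximal) run of non-surviving letters, there is always at least one aligned pair of surviving blocks, and a shift on the fine levels $e_2\dots e_n$ reduces to comparing the inserted $(n-1)$th-order blocks cyclically, so that $\gamma^{(n)}_{\underline e_n}(X^{(n)},Y^{(n)})\ge \gamma^{(n-1)}_{e_2\dots e_n}(X',Y')$ for the inductively chosen $(n-1)$th-order witnesses $X',Y'$; this is the $n$th-order analogue of~(\ref{relation}); (iii) invoke the inductive hypothesis (DG1) on the $(n-1)$th-order witnesses to conclude (DG1) at level $n$; (iv) for (DG2), choose the shift inside the substituted blocks so that the letter $k$ playing the role in~(\ref{eq:symm}) at the inner level lines up, giving $\gamma^{(n)}_{\underline k_n}(X^{(n)},Y^{(n)})\ge\gamma^{(n-1)}_{k_2\dots k_n}(X',Y')\ge 2$, and handle the neighbouring shift $\underline k_n+1$ by the same ``one block entering, one block leaving'' bookkeeping used in the table in the proof of Lemma~\ref{Stperc2}, which contributes a coincidence at the coarse level from each of the two end blocks; (v) finally, as in Lemma~\ref{Stperc2}, push the witness for the case $k_1=0$ to arbitrary $k_1$ by a cyclic shift over $k_1M^{n-1}$ positions, using that shifting $Y^{(n)}_{0\,k_2\dots k_n}$ does not disturb (DG1) and translates the relevant $\gamma^{(n)}$'s by exactly $k_1M^{n-1}$.

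I expect the main obstacle to be step (iv), the verification of $\gamma^{(n)}_{\underline k_n+1}\ge 2$: the ``$+1$'' shift moves the fine structure across a block boundary, so one cannot simply quote the inner-level (DG2) but must argue, as in the Lemma~\ref{Stperc2} table, that the first and last surviving blocks of $X^{(n)}$ each still produce a coincidence with a surviving block of $Y^{(n)}$ after the unit shift --- this needs the substituted inner blocks to begin and end in a controlled way (e.g. to start with a run of surviving letters, which is why $X^{(n)}$ is built from ``ones at the front'' patterns) and it interacts with the auxiliary shift $s$ that realigns blocks. Making this uniform in $n$ rather than checking it on an $8\times 8$ table is the part that requires care; everything else is the bookkeeping already rehearsed in Section~\ref{classify}, lifted through the block-substitution and the induction.

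\smallskip

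\emph{Remark.} Once Lemma~\ref{lemma_propagation} is in place, combining it with Theorem~\ref{StDD09} applied to $(\mu^{(n)},\lambda^{(n)})$ and with the fact that the order-$n$ correlation coefficients detect the lower spectral radius of the family $\{\exmat{k}:k\in\A\}$ yields the announced extension of the spectral-radius characterization of~\cite{DK08} to all pairs satisfying the DGC.
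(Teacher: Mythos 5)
Your plan is genuinely more complicated than the paper's argument, and in a way that misses the point of the hypothesis. You model your construction on the block-substitution of Lemma~\ref{Stperc2} (replace each surviving letter of a first-order witness by a shifted inner block, each non-surviving letter by $0^{M^{n-1}}$, add a compensating shift $s$, do induction on $n$), and you correctly anticipate that step~(iv) -- propagating the $\underline{k}_n+1$ coincidence count across block boundaries -- will be delicate. But that substitution machinery was invented in Lemma~\ref{Stperc2} precisely because there the DGC does \emph{not} hold at order~$1$ (the case $m=\sqrt{M+1}$ falls in the gap of Lemma~\ref{Stperc}), so one has to manufacture second-order witnesses from scratch. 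In Lemma~\ref{lemma_propagation} the DGC is \emph{assumed} at order~$1$, and this changes everything: the paper simply takes the witnesses for $\underline{k}_n=kk_2\dots k_n$ to be the full Cartesian powers
$X_k^{(n)}=\{l_1\dots l_n: l_j\in X_k\ \forall j\}$, $Y_k^{(n)}=\{l_1\dots l_n: l_j\in Y_k\ \forall j\}$,
where $(X_k,Y_k)$ is the first-order witness pair for the single letter $k=k_1$. (DG0) is then immediate, and the verification of (DG1)/(DG2) is a one-line matrix computation via the expansion~(\ref{exmat expansion}): for the deterministic pair concentrated on $(X_k,Y_k)$ one has $\mathcal{M}_k^{(n)}(\underline{k}_n)=\mathcal{M}_k(k)\mathcal{M}_k(k_2)\cdots\mathcal{M}_k(k_n)$, and since (DG2) gives $[1\ 1]\mathcal{M}_k(k)\geq[2\ 2]$ while (DG1) gives $[1\ 1]\mathcal{M}_k(k_j)\geq[1\ 1]$ for every $j$, left-multiplying by $[1\ 1]$ yields column sums $\geq 2$ at the shift $\underline{k}_n$ and $\geq 1$ at every other shift. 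No alignment of inner blocks, no auxiliary shift $s$, no ``one block entering, one block leaving'' bookkeeping is needed.

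So the gap in your proposal is not that it is wrong in principle (a sufficiently careful version of the substitution might be made to work), but that it overlooks the simple witness that makes the whole problem linear: because DGC already holds at level~$1$, Cartesian powers of the level-$1$ witnesses are level-$n$ witnesses, and the column-sum inequalities propagate automatically through the matrix product. The difficulty you flag in step~(iv) is real for the construction you chose, but it is self-inflicted; the paper's choice of witness dissolves it. If you keep your route you would also need to justify, uniformly in the arbitrary DGC-admissible sets $X_k,Y_k$ (not the structured strings of Lemma~\ref{Stperc}), that the substituted $Y^{(n)}$ never has a maximal run of zero blocks and that both end blocks still contribute a coincidence after the $+1$ shift -- neither of which is clear without extra structural assumptions on the first-order witnesses.
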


\textbf{Proof.} Choose a  string $\underline{k}_n\in\A^{(n)}$,
which we write as $\underline{k}_n = kk_2\ldots k_n $, with
$k\in\A$ and $k_2\dots k_n\in\A^{(n-1)}$. We check that we can
find $n$th order sets satisfying the DGC for this
$\underline{k}_n$. Since the pair $(\mu,\lambda)$ satisfies the
DGC, there exist first order sets $X_{k}, Y_{k}\subseteq \A$
satisfying (DG0), (DG1) and (DG2). Define
\begin{eqnarray*}
X_{k}^{(n)} & := & \left\{ \underline{l}_n = l_1\ldots l_n\in\A^{(n)}:l_j\in X_{k}\mbox{\ for\ all\ } j = 1,\ldots,n \right\},\nonumber\\
Y_{k}^{(n)} & := & \left\{ \underline{l}_n = l_1\ldots l_n\in\A^{(n)}:l_j\in Y_{k}\mbox{\ for\ all\ } j = 1,\ldots,n \right\}.
\end{eqnarray*}

Obviously, $\mu^{(n)}(X_{k}^{(n)}) > 0$ and
$\lambda^{(n)}(Y_{k}^{(n)}) > 0$. Define a new pair of $n$th order
joint survival distributions by $\mu_{k}^{(n)}(X_{k}^{(n)}) =
\lambda_{k}^{(n)}(Y_{k}^{(n)}) = 1$. Also define a first order
deterministic pair of joint survival distributions by
$\mu_{k}(X_{k}) = \lambda_{k}(Y_{k}) = 1$. The expectation
matrices belonging to these $n$th order survival distributions are
related to those belonging to the first order survival
distributions by
\begin{equation*}
\mathcal{M}_{k}^{(n)}(\underline{k}_n) =
\mathcal{M}_{k}(\underline{k}_n) =
\mathcal{M}_{k}(k)\mathcal{M}_{k}(k_2)\ldots\mathcal{M}_{k}(k_n).
\end{equation*}
Using that $X_{k}$ and $Y_{k}$ satisfy  (DG1) and (DG2), and that
the columns sums of the expectation matrices are equal to the
correlation coefficients, we obtain that
\begin{eqnarray*}
[1\;\; 1]\mathcal{M}_{k}^{(n)}(\underline{k}_n) = [1\;\; 1]
\mathcal{M}_{k}(k)\prod_{j=2}^{n}\mathcal{M}_{k}(k_j) \geq [2\;\;
2]\prod_{j=2}^{n}\mathcal{M}_{k}(k_j)\geq [2\;\; 2]\nonumber
\end{eqnarray*}
elementwise, which means that $Z_{k}^{(n);L}(\underline{k}_n)\geq
2$ and $Z_{k}^{(n);R}(\underline{k}_n)\geq 2$, or equivalently
\begin{equation*}
\gamma_{\underline{k}_n}(X_{k}^{(n)},Y_{k}^{(n)})\geq 2;\quad\quad
\gamma_{\underline{k}_n+1}(X_{k}^{(n)},Y_{k}^{(n)})\geq 2.
\end{equation*}
Similarly $\gamma_{\underline{l}_n}(X_{k}^{(n)},Y_{k}^{(n)})\geq
1$ for all $\underline{l}_n\in\A^{(n)}$. It follows that the pair
$(\mu^{(n)},\lambda^{(n)})$ satisfies the  DGC. \hfill $\Box$
\\
\\
This propagation property leads to the theorem below. The lower
spectral radius $\underline{\rho}(\Sigma)$ of a set $\Sigma$ of
square matrices is defined by
\begin{equation*}
\underline{\rho}(\Sigma) := \liminf_{n\rightarrow\infty} \min_{A_1,\ldots,A_n\in\Sigma}||A_1\ldots A_n||^{1/n},
\end{equation*}
for some matrix norm $||\cdot||$. For two $M$-adic random Cantor
sets, let $\Sigma_M$ be the corresponding collection of
expectation matrices
\begin{equation}
\Sigma_M := \left\{\mathcal{M}(0),\ldots,\mathcal{M}(M-1)\right\}.
\end{equation}
Then we obtain the following result:

\begin{theorem}\label{St:spectral}
Consider the algebraic difference $F_1-F_2$ between two $M$-adic
independent random Cantor sets $F_1$ and $F_2$ with the same joint
survival distribution satisfying the distributed growth condition.
\begin{enumerate}
\item If $\underline{\rho}(\Sigma_M)>1$, then $F_1-F_2$ contains no interval a.s. on $\left\{F_1-F_2\not=\emptyset\right\}$.
\item If $\underline{\rho}(\Sigma_M)<1$, then $F_1-F_2$ contains no interval a.s.
\end{enumerate}
\end{theorem}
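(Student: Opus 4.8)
This is the ``lower spectral radius'' sharpening of Theorem~\ref{StDD09}, parallel to Theorem~6.1 of~\cite{DK08} (which proves the analogue under the JSC). One remark on the statement: in item~(1) the printed ``contains no interval'' should read ``contains an interval a.s.\ on $\{F_1-F_2\not=\emptyset\}$'', since the hypothesis $\underline{\rho}(\Sigma_M)>1$ strengthens ``$\gamma_k>1$ for all $k$'' of Theorem~\ref{StDD09}(1) and must therefore give the same conclusion; we take item~(1) in this corrected form. The plan is to run the whole argument at higher order: for each $n\geq1$ write $\Lambda=\bigcap_{m}\Lambda^{nm}$ and view $\Lambda$ as generated by the $n$th order pair $(\mu^{(n)},\lambda^{(n)})$ on $\A^{(n)}=\Z/M^n\Z$. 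By Lemma~\ref{lemma_propagation} this pair again satisfies the DGC, so Theorem~\ref{StDD09} is available at level $n$, with $n$th order correlation coefficients $\gamma^{(n)}_e$, $e\in\A^{(n)}$.

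The link with $\underline{\rho}(\Sigma_M)$ is purely algebraic. By~(\ref{exmat expansion}) the $n$th order expectation matrix of the single letter $\underline{k}_n=k_1\ldots k_n\in\A^{(n)}$ equals $\mathcal{M}(k_1)\cdots\mathcal{M}(k_n)$, and Lemma~\ref{exmat sum to gamma} applied at level $n$ gives $[1\;\;1]\,\mathcal{M}(k_1)\cdots\mathcal{M}(k_n)=[\gamma^{(n)}_{\underline{k}_n+1}\;\;\gamma^{(n)}_{\underline{k}_n}]$; as $\underline{k}_n$ ranges over $\A^{(n)}$ these column sums range over all of $\{\gamma^{(n)}_e:e\in\A^{(n)}\}$. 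Since $\underline{\rho}$ is independent of the norm, I would compute it with $\|A\|=\max_j\sum_i|A_{ij}|$, the maximal column sum --- a genuine norm on the nonnegative matrices of $\Sigma_M$.

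Part~(2) is then immediate: if $\underline{\rho}(\Sigma_M)<1$, by definition of the lower limit there are arbitrarily large $n$ and a path $k_1\ldots k_n$ with $\|\mathcal{M}(k_1)\cdots\mathcal{M}(k_n)\|<1$, i.e.\ both column sums below $1$, i.e.\ $\gamma^{(n)}_{\underline{k}_n}<1$ and $\gamma^{(n)}_{\underline{k}_n+1}<1$; fixing such an $n$ and invoking Theorem~\ref{StDD09}(2) for the $n$th order Cantor set (its DGC granted by Lemma~\ref{lemma_propagation}) shows that $F_1-F_2$ contains no interval a.s.

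For part~(1) the plan is to produce an $n$ for which $\gamma^{(n)}_e>1$ for \emph{all} $e\in\A^{(n)}$ --- equivalently $\nu(\mathcal{M}(k_1)\cdots\mathcal{M}(k_n))>1$ for every path, where $\nu(A)$ denotes the minimal column sum. The hypothesis yields only that the minimum over length-$n$ paths of $\|\mathcal{M}(k_1)\cdots\mathcal{M}(k_n)\|$ is at least $\underline{\rho}(\Sigma_M)^n$, which controls the \emph{maximal} column sums; since $\nu$ is merely supermultiplicative, a priori a product could carry one column that never grows, and ruling this out under the DGC is the main obstacle. The point (handled as in~\cite{DK08}) is that the only source of such a bounded column is a corner letter: $C^L_{0^n}$ and $C^R_{(M-1)^n}$ hold at most one triangle (cf.\ the proof of Lemma~\ref{lemma_DG3}), so only $\mathcal{M}(0)$ and $\mathcal{M}(M-1)$ can leave a column sum of a high power bounded, and doing so forces a marginal support too sparse to admit sets $X_k,Y_k$ satisfying (DG0)--(DG2); hence in any length-$n$ product at most one column can fail to grow, $\nu$ and $\|\cdot\|$ of long products agree up to a subexponential factor, and $\underline{\rho}(\Sigma_M)>1$ forces $\nu(\mathcal{M}(k_1)\cdots\mathcal{M}(k_n))>1$ for all paths once $n$ is large. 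Then $\gamma^{(n)}_e>1$ for all $e\in\A^{(n)}$, and Theorem~\ref{StDD09}(1) at level $n$ (DGC again by Lemma~\ref{lemma_propagation}) gives an interval a.s.\ on $\{F_1-F_2\not=\emptyset\}$. The symmetry $\mu=\lambda$, hence $\gamma_k=\gamma_{-k}$, is available throughout and simplifies the ``at most one spoiled column'' bookkeeping; everything else reduces to Lemma~\ref{lemma_propagation} and Theorem~\ref{StDD09}.
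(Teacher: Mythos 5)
Your proposal follows essentially the same route as the paper: identify the $n$th order correlation coefficients with the column sums of the length-$n$ products from $\Sigma_M$, invoke Lemma~\ref{lemma_propagation} so that Theorem~\ref{StDD09} becomes applicable at order $n$, and import from \cite{DK08} (Theorem 6.1) the comparison between minimal and maximal column sums of long products that part (1) requires; you are also right that item (1) of the statement is a misprint for ``contains an interval a.s.\ on $\{F_1-F_2\neq\emptyset\}$'', and your part (2) is complete. The one caveat is your gloss on how \cite{DK08} closes the min-versus-max column-sum gap (``only $\mathcal{M}(0)$ and $\mathcal{M}(M-1)$ can leave a column bounded, so at most one column is spoiled''): this is an unverified guess, whereas the mechanism the paper actually points to is the symmetry $m_e=m_{-e}$ coming from $\mu=\lambda$ together with the irreducibility that the DGC supplies for free; since the paper itself delegates exactly this step to \cite{DK08}, your write-up is no less complete than the paper's own proof.
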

\textbf{Proof.} The proof is basically the same as the proof of
Theorem 6.1 in \cite{DK08}. There is a difference in the fact that
here we do not require irreducibility explicitly. From the
symmetry $\mu=\lambda$ it follows that $m_e=m_{-e}$ for all
$e\in\A\cup -\A$. Now, since the DGC holds, we get the
irreducibility for free.

After derivation of the same statements concerning the $n$th order
correlation coefficients as in \cite{DK08}, we apply our Theorem
\ref{StDD09}. This is justified by the fact that the DGC
propagates to higher orders, as was shown in Lemma
\ref{lemma_propagation}. \hfill$\Box$

\section{Final remarks}

We have solved the problem of the Palis conjecture for correlated
fractal percolation (Theorem~\ref{corrfrac-class}), even for the
critical case. For this we introduced a new growth condition,
which as a bonus gives a more general, and a more simple proof of
the basic theorem (Theorem~\ref{StDD09}). It is more simple since
we do not need the combinatorial `color lemma' of \cite{DS08} and
\cite{DK08}, nor the irreducibility condition of \cite{DK08}. The
counterexample of \cite{DK08} (where the spectral radius is larger
than 1, but still there is no interval in the algebraic
difference) is now explained by the fact that $\gamma=1$ in that
example. In view of Proposition~\ref{crit}, this makes us
conjecture that \emph{in general} the algebraic difference
$F_1-F_2$ will not contain an interval if $\dim_{\rm
H}(F_1)+\dim_{\rm H}(F_2)=1$ (except for deterministic sets).


\begin{thebibliography}{3}

\bibitem{DS08} F.~M.~Dekking and K.~Simon. \emph{On the size of the
algebraic difference of two random Cantor sets.} Random Structures
Algorithms \textbf{32} (2008), no. 2, 205-222.

\bibitem{DK08} F. M. Dekking and B. Kuijvenhoven. \emph{Differences of
random Cantor sets and lower spectral radii.} ArXiv: 0811.0525. To appear in the
Journal of the European Mathematical Society.

\bibitem{SPB01} M.~C.~Sukop, E.~Perfect, N.~R.~A.~Bird. \emph{Water retention of prefractal porous media generated with the homogeneous and heterogeneous algorithms}
Water Resources Research \textbf{32} (2001), no. 10, 2631-2636.

\bibitem{M} Benoit B.~Mandelbrot. \emph{The fractal geometry of nature}
W. H. Freeman and Co.,San Francisco, Calif., (1982),v+460, 0-7167-1186-9, MR665254.

\bibitem{M74} Benoit B.~Mandelbrot. \emph{Intermittent turbulence in self-similar cascades: divergence of high moments and dimension of the carrier}
Journal of Fluid Mechanics \textbf{62} (1974), part2, 331-358.

\bibitem{PT}
Jacob Palis and Floris Takens.
\newblock {\em Hyperbolicity and sensitive chaotic dynamics at homoclinic
  bifurcations}, volume~35 of {\em Cambridge Studies in Advanced Mathematics}.
\newblock Cambridge University Press, Cambridge, 1993.
\newblock Fractal dimensions and infinitely many attractors.


\end{thebibliography}
\end{document}